\numberwithin{equation}{section}
\font\bigbf=cmbx10 at 16pt
\def\ds{\displaystyle}
\def\forall{\hbox{for all}~}
\def\L{{\bf L}}
\def\ve{\varepsilon}
\def\thetam{\theta_{\min}}
\def\vmax{v_{\max}}
\def\D{{\mathcal D}}
\def\vs{\vskip 2em}
\def\v{\vskip 1em}
\def\Prob{\hbox{Prob.}}
\def\sqr#1#2{\vbox{\hrule height .#2pt
\hbox{\vrule width .#2pt height #1pt \kern #1pt
\vrule width .#2pt}\hrule height .#2pt }}
\def\bega{\begin{array}}\def\enda{\end{array}}
\def\begi{\begin{itemize}}\def\endi{\end{itemize}}
\def\Prob{\hbox{Prob.}}
\newcommand{\sg}[1]{\mathrm{sign}\big(#1\big)}
\newcommand{\set}[1]{\left\{#1\right\}}
\newcommand{\norm}[1]{\left\lVert#1\right\rVert}
\newcommand{\sett}[2]{\left\{#1 ~\Big| ~ #2\right\}}
\newcommand{\p}[1]{\left( #1 \right)}
\DeclareMathOperator*{\argmax}{arg\,max}
\DeclareMathOperator*{\argmin}{arg\,min}
\newtheorem{theorem}{Theorem}[section]
\newtheorem{proposition}{Proposition}[section]
\newtheorem{definition}[theorem]{Definition}
\newtheorem{lemma}[theorem]{Lemma}
\newtheorem{corollary}{Corollary}[section]
\begin{document}

\title{\bigbf
A model of debt with bankruptcy risk\\ and currency devaluation}
\vs
\author{\it Rossana Capuani, Steven Gilmore, and Khai T. Nguyen}
\author{Rossana Capuani, Steven Gilmore, and Khai T. Nguyen\\
\\
  {\small  Department of Mathematics, North Carolina State University. }\\
 \\  {\small e-mails: ~rcapuan@ncsu.edu, ~sjgilmo2@ncsu.edu,  ~ khai@math.ncsu.edu}
 }
\maketitle

\begin{abstract}
The paper studies  a system of Hamilton-Jacobi equations, arising from a stochastic optimal debt management problem in an infinite time horizon with exponential discount, modeled as a noncooperative interaction between a borrower and a pool of risk-neutral lenders. In this model, the borrower is a sovereign state that can decide how much to devaluate its currency and which fraction of its income should be used to repay the debt.  Moreover, the borrower has the possibility of going bankrupt at a random time and must declare bankruptcy if the debt reaches a threshold $x^*$. When bankruptcy occurs, the lenders only recover a fraction of their capital. To offset the possible loss of part of their investment, the lenders buy bonds at a discounted price which is  not given  a priori. This leads to a  nonstandard optimal control problem. We  establish an existence result of solutions to this system and in turn recover optimal feedback  payment strategy $u^*(x)$ and currency devaluation $v^*(x)$. In addition,  the behavior of $(u^*,v^*)$ near $0$ and $x^*$ is studied.
\end{abstract}
\section{Introduction}
Consider a system of  Hamilton-Jacobi equations
\begin{equation}\label{HJ0}
    \begin{cases}
        (r+\rho(x))V~ =~\ds \rho(x)\cdot B+H(x,V',p) + \dfrac{\sigma^2x^2}{2}\cdot V'' \,, \\[10pt]

        \p{ r + \lambda + v(x)}\cdot p - \p{ r + \lambda} ~=~ \rho(x)\cdot [ \theta(x) - p] + H_\xi(x, V', p) \cdot p' + \dfrac{(\sigma x )^2}{2}\cdot p'' \,,

    \end{cases}
\end{equation}
motivated by a stochastic optimal debt management problem in infinite time horizon with exponential discount, modeled as a noncooperative interaction between a borrower and a pool of risk-neutral lenders. Here, the independent variable $x$ is the debt-to-income ratio and $V$ is the value function for the
borrower who is a sovereign state that can decide the devaluation rate of its currency $v$  and the fraction of its income $u$ which is used to repay the debt. The borrower has a possibility to go bankrupt with an instantaneous bankruptcy risk $\rho$, and  must declare bankruptcy when the  debt-to-income ratio reaches a threshold $x^*>0$. The salvage function $\theta$ determines the fraction of capital that can be recovered by lenders when bankruptcy occurs. To offset the possible loss of part of their investment, the lenders buy bonds at a discounted rate $p$ which is  not given a priori.  Rather, it is determined by the expected evolution of the  debt-to-income ratio at all future times. Hence it depends globally on the entire feedback controls $u$ and $v$. This leads to a highly nonstandard optimal control problem, and  a ``solution'' must be understood as a Nash equilibrium, where the strategy implemented by the borrower represents the best reply to the strategy adopted by the lenders, and conversely.


In the economics literature,  some related models of debt and bankruptcy, with a focus on mathematical analysis, can be found in \cite{BJ,BMNP, BN1, AK, NT}. In \cite{BN1}, the borrower has a fixed income and large values of the debt determine a bankruptcy risk, adding uncertainty to the model. In \cite{NT}, a numerical analysis was performed of a similar model in which the uncertainty comes not from bankruptcy risk but the random evolution of the borrower's income. An analytical study of a slight variant of the model was performed in \cite{BMNP} where no currency devaluation is available to the government. The authors constructed optimal feedback solutions in the stochastic case and provided an explicit formula for the optimal strategy in the deterministic case. Moreover,  their analysis also shows how the expected total cost of servicing the total debt together with the bankruptcy cost  is affected by different choices of $x^*$. Interestingly, under a natural assumption, the possibility of a {\it ``Ponzi scheme''}, where the old debt is serviced by initiating more and more new loans, can be ruled out. This  study is continued by adding the option for currency devaluation in \cite{AK}. Recently,  a stochastic model with no currency devaluation but with uncertainty coming from bankruptcy risk was analyzed in \cite{BJ}.

The present paper aims to provide a detailed mathematical analysis of a more general model of debt and bankruptcy. When the currency devaluation is not an option for the borrower ($v \equiv 0$), or the bankruptcy risk
 \[
\rho(x) ~=~ \begin{cases}
	0 \quad & \mbox{ if } x < x^* \\
 	+\infty \quad & \mbox{ if } x = x^*,
 	\end{cases}
\]
our model reduces to the one analyzed in \cite{BJ} or \cite{AK}, respectively.  We  establish an existence result for the system of Hamilton-Jacobi equations (\ref{HJ0}).  In turn, this yields the existence of a pair of optimal feedback controls $(u^*,v^*)$ which minimizes the expected cost to the borrower. More precisely, let $L:[0,1[\to [0,+\infty[$ be the cost for the borrower to implement the control strategy $u(\cdot)$ and let $c:[0,v_{\max}[\to [0,+\infty[$ be a social cost resulting by devaluation, i.e., the increasing cost of the welfare and of the imported goods. We show that 
\begin{itemize}
\item{\it  given  the debt-to-income ratio threshold $x^*$ and the bankruptcy risk function $\rho$ with $\lim_{x\to x^*-}\rho(x)=+\infty$, if $L$ and $c$ are strictly convex then the system   (\ref{HJ0}) admits a $\mathcal{C}^2$ solution $(V,p)$ in $[0,x^*]$. This implies that an optimal feedback solution to  the model of debt and bankruptcy is
\[
u^*(x)~=~\underset{w\in [0,1]}{\mathrm{argmin}}\left\{ L(w) - w\cdot\dfrac{V(x)}{p(x)}\right\},\qquad v^*(x)~=~\underset{v\in [0,v_{\max}]}{\mathrm{argmin}}\left\{ c(v) - vxV'(x)]\right\}.
\]
}
\end{itemize}

Since  $\rho$ goes to $+\infty$ as $x$ tends to $x^*$, the system (\ref{HJ0}) is not uniformly elliptic at $0$ and $x^*$. To handle this difficulty,  the classical idea is to construct  solutions of approximate systems  as steady states of corresponding auxiliary  parabolic systems. In order to obtain a solution to the original system (\ref{HJ0}), we derive explicit a priori estimates on the derivatives of approximate solutions. As a consequence, the devaluation of currency is not optimal when the debt-to-income ratio $x$ is sufficiently small. In addition, we provide lower and upper bounds for the value function $V$ as subsolution and supersolution to the first equation of (\ref{HJ0}). Relying on these bounds, we show that when $x$ is sufficiently close to $x^*$
\begin{itemize}
{\it
\item if the risk of bankruptcy $\rho$ slowly approaches to infinity as $x$ tends to $x^*$, i.e.,
$$
\ds \int_{x^*-\delta}^{x^*}\rho(t)~dt<+\infty\qquad for~some~\delta>0,
$$
then the optimal strategy of borrower involves continuously devaluating its currency and making payment,
\item conversely, if the risk of bankruptcy $\rho$ quickly approaches to infinity infinity as $x$ tends to $x^*$, i.e.,
$$
 \lim_{x\to x^*-}\rho(x) (x^*-x)^2~=~+\infty,
$$
then any action to reduce the debt is not optimal.
}
\end{itemize}

The remainder of the paper is organized as follows. In Section 2, we provide a more detailed description of the model and derive the equations satisfied by the value function $V$ and the discounted bond price $p$. In Section 3, we begin by showing the existence of optimal controls in feedback form. We close by performing a more detailed analysis of the behavior of the optimal feedback controls near $x^*$, under certain assumptions on the bankruptcy risk.
\v
\section{Model derivation and system of Hamilton-Jacobi equations}
\setcounter{equation}{0}
\subsection{Model derivation}
We introduce  our optimal debt management problem in infinite time horizon, modeled as a noncooperative interaction between a borrower and a pool of risk-neutral lenders.  Let $v(t)$ be the devaluation rate at time $t$, regarded as a control. The total income $Y$ is governed by a stochastic process
\[
dY~=~(\mu +v(t)) Y(t) dt + \sigma Y(t) dW,
\]
where $W$ is a Brownian motion on a filtered probability space, $\sigma>0$ is the volatility, and $\mu$ is the average growth rate of the economy. We denote by $X(t)$ the total debt of a borrower, financed by issuing bonds, and
\begin{itemize}
\item $r$ = the interest rate paid on bonds;
\item $\lambda$ = the rate at which the borrower pays back the principal.
\end{itemize}
When an investor buys a bond of  unit nominal value, he will receive a continuous stream of payments with intensity $(r+\lambda) e^{-\lambda t}$. If no bankruptcy occurs, the payoff for an investor will thus be
\[
\int_0^\infty e^{-rt} (r+\lambda) e^{-\lambda t}\, dt~=~1.
\]
Otherwise, a lender recovers only a fraction $\theta\in [0,1]$ of his outstanding capital which depends on the total amount of debt at the time on bankruptcy. To offset this possible loss, the investor buys a bond with unit nominal value at a discounted price ${p}\in [0,1]$. Hence, as in \cite{NT}, the total debt evolves according to
\[
\dot X(t)~=~-\lambda X(t) + \dfrac{(\lambda + r) X(t) - U(t)}{p(t)},
\]
where $U(t)$ is the rate of payments that the borrower chooses to make to the lenders at time $t$. By It\=o's formula, one derives the stochastic evolution of the debt-to-income ratio $x = X/ Y$
\begin{equation}\label{SDE}
dx(t)~=~\left[ \left(\dfrac{\lambda+r}{p(t)} -\lambda + \sigma^2 -\mu - v(t)
\right) x(t) - \dfrac{u(t)}{p(t)}\right]\, dt - \sigma \,x(t)\, dW,
\end{equation}
where $u=U / Y \in [0,1]$ is the portion of the total income allocated to reduce the debt.

In this model, we assume there exists a threshold $x^*>0$ beyond which bankruptcy immediately occurs. Define $T_{x^*}$ as the time when the borrower's debt-to-income ratio first reaches $x^*$
\begin{equation}\label{TB}
T_{x^*}~:=~\inf\{ t>0\,;~~x(t)= x^*\}\,.
\end{equation}
As in \cite{BJ}, the borrower may go bankrupt at random time $\mathcal{T}_B$ before $T_{x^*}$. If at time $\tau$ the borrower is not yet bankrupt and the debt-to-income ratio is $x(\tau)=y$, then the probability that bankruptcy will occur shortly after time $\tau$ is measured by
\[
\text{Prob.}~\left\{\mathcal{T}_B\in [\tau,\tau+\ve]~\Big|~\mathcal{T}_B>\tau, x(\tau)=y\right\}~=~\rho(y)\cdot \ve+o(\ve)\,.
\]
Here, $\rho:[0,x^*[\to [0,+\infty[$ is the instantaneous bankruptcy risk, an increasing function of $x$ with $\lim_{x\to x^{*-}}\rho(x)=+\infty$. Hence, the probability that the borrower is not yet bankrupt at time $t>0$ is computed as
\begin{equation}\label{T-B}
\Prob~\{\mathcal{T}_B>t\}~=~\begin{cases} \exp\left\{\ds-\int_0^t\rho(x(\tau))~d\tau\right\}~~& \mathrm{if}~~~~t~<~T_{x^*},\cr\cr
0~~& \mathrm{if}~~~~t\geq T_{x^*}\,.\end{cases}
\end{equation}
Let $\theta(x)$ be the salvage rate that determines the fraction of the outstanding capital that can be recovered by lenders if bankruptcy occurs when the debt-to-income ratio has size $x$. As in \cite{BMNP, BN1, NT}, the discounted bond price is uniquely determined by the competition of a pool of risk-neutral lenders
\begin{multline}\label{P1}
p~=~ E\Big[\int_0^{\mathcal{T}_{B}}(r+\lambda)\exp\left\{-\int
_0^\tau \bigl(\lambda +r+v(x(s))\bigr)\,ds\right\}d\tau \\
    +\exp\left\{{\ds -\int^{\mathcal{T}_{B}}_{0}(r+\lambda+v(x(\tau))d\tau}\right\}\cdot \theta(x^*)\Big].
\end{multline}
Given an initial size $x_0$ of the debt-to-income ratio, the borrower wants to find a pair of optimal controls $(u,v)$ which minimizes his total expected cost, exponentially discounted in time:
\begin{equation}\label{C1}
    \mathrm{Minimize}~J[x_0,u,v]~=~E\left[\int_0^{\mathcal{T}_{B}} e^{-rt}\cdot L(u(t)) + c(v(t)) dt + e^{-r \mathcal{T}_{B}} B\right]_{x(0)=x_0}\,,
\end{equation}
where $c(v)$ is the social cost resulting from devaluation, $L(u)$ is the cost to the borrower for putting income towards paying the debt, 
and $B$ is the cost of bankruptcy.

To complete this subsection, we introduce standard assumptions in our model.  Concerning the functions $\theta,\rho,L$ and $c$, we shall assume
{\it
\begin{itemize}
\item [{\bf (A1)}] The map $\theta: [0,x^*] \to ]0,1]$ is non-increasing, continuous, and locally Lipschitz in $[0,x^*[$.
\item[\bf{(A2)}] The function $\rho: [0,x^*[\to [0,+\infty[$ is continuously differentiable for $x\in[0,x^*[$, and satisfies
    \begin{equation*}
    \rho(0)~=~0,\qquad \rho'(x)~\geq~ 0\qquad\mathrm{and}\qquad  \lim_{x \to x^*}\rho(x)~=~+\infty.
    \end{equation*}
    \item [{\bf (A3)}] The function $(L,c): [0,1[\times [0,\vmax[\to [0,+\infty[\times [0,+\infty[$ is twice continuously differentiable such that
    \[
    L'(u),~ c'(v)~>~0,\qquad L''(u),~c''(v)~\geq~\delta_0    \]
    and
    \[
    L(0)~=~c(0)~=~0,\qquad \lim_{u\to 1}L(u)~=~+\infty,\qquad \lim_{v\to \vmax}c(v)~=~+\infty
    \]
    for some constant $\delta_0 >0$ and $v_{\max}\geq 0$.

  \end{itemize}
}

\subsection{System of Hamilton-Jacobi equations} The stochastic control system (\ref{SDE})--(\ref{P1}) is not standard. Indeed, the discount price $p$ in (\ref{P1}) depends on the debt-to-income ratio not only at the present time $t$ but also at all future times. 
Therefore, it is natural to look at this model in a feedback form. More precisely, assume that the optimal control has feedback form, so that
\[
u~=~u^*(x),\qquad v~=~v^*(x)\qquad\qquad \hbox{for}\quad x\in [0,x^*].
\]
\begin{definition}[Stochastic optimal feedback solution]\label{OFC} We say that a triple of functions \\ $(u^*(x),v^*(x),p(x))$ provides an optimal solution to the problem of optimal debt management (\ref{SDE})--(\ref{C1}) if:
\begin{itemize}
\item [(i)] Given the function $p(\cdot)$,  for every initial value $x_0\in [0, x^*]$
the feedback control $(u^*,v^*)$ with bankruptcy time $\mathcal{T}_{B}$ as in (\ref{T-B})
provides an optimal solution to the
stochastic  control problem (\ref{C1}), with dynamics (\ref{SDE}).
\item [(ii)] Given the feedback control $(u^*(\cdot),v^*(\cdot))$, the discounted price $p$ satisfies (\ref{P1}), where $\mathcal{T}_B$ is the bankruptcy time (\ref{T-B}) determined by the dynamics
(\ref{SDE}).
\end{itemize}
\end{definition}
Let us introduce the associated Hamiltonian function to (\ref{SDE})-(\ref{P1})
\begin{equation}\label{H1}
    H(x,\xi,p)~=\min_{(u,v)\in [0,1[\times [0,\vmax[}\left\{L(u) + c(v) -\left(\frac{u}{p}+xv\right)\cdot \xi\right\}+ \left(\dfrac{\lambda+r}{p} -\lambda + \sigma^2 -\mu\right) x\, \xi
\end{equation}
and two functions
\begin{equation}\label{u-v-ti}
\tilde{u}(\xi,p)~=~\underset{u\in [0,1]}{\mathrm{argmin}}\left\{ L(u) - u\,\dfrac{\xi}{p}\right\},\qquad \tilde{v}(x,\xi)~=~\underset{v\in [0,\vmax] }{\mathrm{argmin}}\left\{ c(v) - vx\xi\right\}.\end{equation}
Under the assumption {\bf (A3)}, a direct computation yields
\begin{align}\label{uv-tilde}
\tilde{v}(x,\xi)=
    \begin{cases}
    0 \ \ \ \ &\mbox{if} \quad \xi x \leq c'(0)\cr\cr
    (c')^{-1}(\xi x) &\mbox{if} \quad \xi x > c'(0)
    \end{cases}
   \quad\mathrm{and}\quad
    \tilde{u}(\xi,p)=
    \begin{cases}
    0, \ \ \ \ &\mbox{if} \quad \frac{\xi}{p} \leq L'(0)\cr\cr
    (L')^{-1}\left(\frac{\xi}{p}\right), &\mbox{if} \quad  \frac{\xi}{p} > L'(0).
    \end{cases}
\end{align}
 Assume that the discount bond price, $p(\cdot)$ is given. The value function
 \[
  V(x_0) ~=~ \inf_{u,v} J[x_0,u,v],
\]
solves the following second-order ODE
\[
 (r+\rho(x))V(x)~= ~\ds \rho(x)\cdot B+H(x,V'(x),p(x)) + \dfrac{\sigma^2x^2}{2}\cdot V''(x),
\]
with boundary values
\[
V(0)~=~0\qquad\mathrm{and}\qquad V(x^*)~=~B.
\]
The optimality condition and (\ref{uv-tilde}) imply that the feedback strategies are recovered by
\begin{equation}\label{ufb}
    u^*(x) ~=~\tilde{u}(V'(x),p(x)) ~=~
        \begin{cases}
            0, &\text{ if }\quad  \dfrac{V'(x)}{p(x)}~ \leq~ L'(0), \\[10pt]
            (L')^{-1}\p{\dfrac{V'(x)}{p(x)}} ,&\text{ if }\quad \dfrac{V'(x)}{p(x)} ~>~ L'(0),
        \end{cases}
\end{equation}
and
\begin{equation}\label{vfb}
    v^*(x) ~=~ \tilde{v}(x,V'(x))~=~
        \begin{cases}
            0, &\text{ if }\quad  V'(x)x ~\leq ~ c'(0), \\[10pt]
            (c')^{-1}\p{ V'(x)x}  &\text{ if }\quad V'(x)x ~ > ~ c'(0) \,.
        \end{cases}
\end{equation}
On the other hand, suppose that a pair of optimal feedback controls $(u^*, v^*)$ is known. The Feynman-Kac formula {\cite{Shreve} implies that the discounted bond price $p$ is the solution to  second-order ODE
\begin{equation*}
    \begin{split}
        \p{ r + \lambda + v^*(x)}\cdot p(x) - \p{ r + \lambda} = \rho(x)  & \cdot [ \theta(x) - p(x)] + \\ & H_\xi(x, V'(x), p(x)) \cdot p'(x) + \dfrac{(\sigma x )^2}{2}\cdot p''(x),
    \end{split}
\end{equation*}
with $p(0)=1$ and $p(x^*)=\theta(x^*)$. Therefore,  finding  an optimal solution to the problem of optimal debt management (\ref{SDE})-(\ref{C1}) leads to  the following system of second order implicit ODEs
\begin{equation}\label{sys}
    \begin{cases}
        (r+\rho(x))V(x)~ =~\ds \rho(x)\cdot B+H(x,V'(x),p(x)) + \dfrac{\sigma^2x^2}{2}\cdot V''(x) \,, \\[10pt]
    \begin{aligned}
        \p{ r + \lambda + v(x)}\cdot p(x) - \p{ r + \lambda} ~=~ \rho(x)  & \cdot [ \theta(x) - p(x)] + \\ & H_\xi(x, V'(x), p(x)) \cdot p'(x) + \dfrac{(\sigma x )^2}{2}\cdot p''(x) \,,
    \end{aligned}\\
    v(x)~=~\underset{w\in [0,\vmax] }{\mathrm{argmin}} \set{ c(w) - wx V'(x)} \,,
    \end{cases}
\end{equation}
with boundary conditions
\begin{equation}\label{BC}
    V(0)~=~0, \quad V(x^*)~=~B \quad \text{and} \quad p(0)~=~1, \quad p(x^*)~=~\theta(x^*).
\end{equation}
To complete this subsection, let us collect some useful properties of Hamiltonian $H$.
\begin{lemma}\label{H-C1}
If $\textbf{(A3)}$ holds then $H$ is continuous differentiable and
its gradient at points $(x,\xi,p)\in [0,+\infty[\times[0,+\infty[\times]0,1]$ can be expressed by \begin{equation}\begin{cases}
\displaystyle H_{x}(x,\xi,p)~=& \displaystyle \Big[(\lambda+r)-p(\lambda +\mu +\tilde{v}(x,\xi)-\sigma^2)\Big]\cdot \frac{\xi}{p},\\
\displaystyle H_{\xi}(x,\xi,p)~=& \displaystyle \frac{1}{p}\cdot \Big[x\big((\lambda+r)-p(\lambda +\mu +\tilde{v}(x,\xi)-\sigma^2)\big)-\tilde{u}(\xi,p)\Big],\\
\displaystyle H_{p}(x,\xi,p)~=& \displaystyle (\tilde{u}(\xi,p)-x(\lambda +r))\cdot \frac{\xi}{p^2},
\end{cases}
\end{equation}
where the functions $\tilde{u},\tilde{v}$ are defined in (\ref{uv-tilde}). Furthermore,
 \begin{enumerate}
        \item [(i).] for all $\xi \in [0,+\infty[ $, the function $H$ satisfies
            \begin{equation*}
                \p{ \dfrac{ (\lambda +r)x -1}{p} + (\sigma^2 - \lambda - \mu - v(x))x} \cdot \xi~\leq~H(x, \xi, p)~\leq~\p{ \dfrac{\lambda + r}{p} - \lambda + \sigma^2 - \mu } x\xi \,,
            \end{equation*}
            \begin{equation*}
                \dfrac{ (\lambda +r)x -1}{p} + (\sigma^2 - \lambda - \mu - v(x))x~\leq~H_\xi(x, \xi, p) \leq \p{ \dfrac{\lambda + r}{p} - \lambda + \sigma^2 - \mu } x \,;
            \end{equation*}
        \item [(ii).] for every $(x,p) \in ]0,+\infty[\times ]0,+\infty[$ the map $\xi \mapsto H(x,\xi,p)$ is concave down and satisfies
            \begin{align}
                H(x,0,p) &~=~0 , \label{L3} \\
                H_\xi(x,0,p) &~=~\p{ \dfrac{\lambda + r}{p} - \lambda + \sigma^2 - \mu } x \,.\label{L4}
            \end{align}
    \end{enumerate}
\end{lemma}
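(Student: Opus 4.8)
The plan is to exploit that the minimization in (\ref{H1}) decouples into two independent scalar problems. Since $L(u)+c(v)-\big(\tfrac up+xv\big)\xi=\big(L(u)-\tfrac up\xi\big)+\big(c(v)-xv\xi\big)$, one can write
\[
H(x,\xi,p)~=~\min_{u\in[0,1[}\Big\{L(u)-\tfrac up\xi\Big\}+\min_{v\in[0,\vmax[}\Big\{c(v)-xv\xi\Big\}+\Big(\tfrac{\lambda+r}{p}-\lambda+\sigma^2-\mu\Big)x\xi .
\]
By \textbf{(A3)} the map $u\mapsto L(u)-\tfrac up\xi$ is strictly convex with derivative $L'(u)-\tfrac\xi p$, which is nonnegative at $u=0$ exactly when $\tfrac\xi p\le L'(0)$ and blows up as $u\to1$; hence its minimizer over $[0,1[$ is the unique point $\tilde u(\xi,p)$ of (\ref{uv-tilde}), and analogously $\tilde v(x,\xi)$ is the unique minimizer of $c(v)-xv\xi$. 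Because $(L')^{-1}$ and $(c')^{-1}$ are continuous with $(L')^{-1}(L'(0))=0$ and $(c')^{-1}(c'(0))=0$, the selections $\tilde u,\tilde v$ are continuous on their domains and take values in $[0,1[$ and $[0,\vmax[$.

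To obtain the $\mathcal{C}^1$ regularity and the gradient formula, fix $(x,\xi,p)$ with $p>0$. Coercivity of $L$ near $1$ and of $c$ near $\vmax$ confines the minimizing pair $(\tilde u,\tilde v)$ to a compact subset of $[0,1[\times[0,\vmax[$, locally uniformly in the parameters, and the objective $\Psi(u,v;x,\xi,p)=L(u)+c(v)-\big(\tfrac up+xv\big)\xi+\big(\tfrac{\lambda+r}{p}-\lambda+\sigma^2-\mu\big)x\xi$ is jointly $\mathcal{C}^1$; since the minimizer is unique, Danskin's theorem gives that $H$ is differentiable with $\nabla_{(x,\xi,p)}H=\nabla_{(x,\xi,p)}\Psi$ evaluated at $(u,v)=(\tilde u,\tilde v)$. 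Carrying out that differentiation with $(u,v)$ held fixed and then substituting $u=\tilde u(\xi,p)$, $v=\tilde v(x,\xi)$ produces exactly the three displayed expressions for $H_x,H_\xi,H_p$; continuity of $\tilde u,\tilde v$ then makes these partials continuous, so $H\in\mathcal{C}^1$. One may also verify this by hand: on each of the four regions cut out by the loci $\tfrac\xi p=L'(0)$ and $x\xi=c'(0)$, $H$ is a smooth explicit function (use the inverse function theorem on the active branches), and the one-sided derivatives match on the common boundaries because the two pieces of $\tilde u$, and of $\tilde v$, glue there in a $\mathcal{C}^1$ fashion.

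For part (i), resolving the minimum gives $H=L(\tilde u)+c(\tilde v)-\tfrac{\tilde u}{p}\xi-x\tilde v\xi+\big(\tfrac{\lambda+r}{p}-\lambda+\sigma^2-\mu\big)x\xi$. The upper bound for $H$ is the value of this expression at the admissible competitor $(u,v)=(0,0)$, using $L(0)=c(0)=0$; the lower bound follows by discarding the nonnegative terms $L(\tilde u),c(\tilde v)$ and estimating $-\tfrac{\tilde u}{p}\xi\ge-\tfrac\xi p$ from $0\le\tilde u<1$. The two bounds for $H_\xi$ come straight from $H_\xi=\tfrac1p\big[x\big((\lambda+r)-p(\lambda+\mu+\tilde v-\sigma^2)\big)-\tilde u\big]$ together with $0\le\tilde u<1$ and $\tilde v\ge0$ (here the symbol $v(x)$ in the statement stands for $\tilde v(x,\xi)$). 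For part (ii), for fixed $(x,p)$ each map $\xi\mapsto L(u)+c(v)-\big(\tfrac up+xv\big)\xi$ is affine, so their infimum over $(u,v)$ is concave in $\xi$, and adding the affine term $\big(\tfrac{\lambda+r}{p}-\lambda+\sigma^2-\mu\big)x\xi$ keeps it concave down; at $\xi=0$ the objective reduces to $L(u)+c(v)\ge0$, minimized at $u=v=0$, which gives $H(x,0,p)=0$, i.e.\ (\ref{L3}); and $\tilde u(0,p)=0$, $\tilde v(x,0)=0$, so substituting $\xi=0$ in the formula for $H_\xi$ yields $H_\xi(x,0,p)=\big(\tfrac{\lambda+r}{p}-\lambda+\sigma^2-\mu\big)x$, which is (\ref{L4}). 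The main obstacle in all of this is the $\mathcal{C}^1$ claim: one must ensure that the candidate gradient, namely the gradient of $\Psi$ at the minimizer, really is $\nabla H$ and is continuous across the switching loci where $\tilde u$ or $\tilde v$ changes regime — which is precisely what uniqueness of the minimizer (hence the applicability of Danskin's theorem), or the hands-on piecewise check, secures.
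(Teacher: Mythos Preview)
Your argument is correct. The decoupling into two scalar minimizations, the appeal to Danskin's theorem with the uniqueness of the minimizer (guaranteed by the strict convexity in \textbf{(A3)}), the competitor/sign arguments for the bounds in (i), and the infimum-of-affines reasoning for (ii) all go through as written; the only delicate point is the one you flag yourself, namely that the minimization is over a noncompact set, and your remedy---coercivity of $L$ and $c$ pins the minimizers in a compact box locally uniformly in $(x,\xi,p)$, so Danskin applies after restriction---is exactly right.

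There is nothing to compare methodologically: the paper does not prove this lemma but simply refers the reader to \cite[Lemma~B2 and Lemma~B3]{AK}. Your write-up therefore supplies a self-contained proof where the paper only cites one. If anything, the envelope-theorem route you take is the natural one and is presumably what lies behind the cited lemmas as well; your alternative ``hands-on'' verification (compute $H$ explicitly on the four regions cut out by $\xi/p=L'(0)$ and $x\xi=c'(0)$ and match one-sided derivatives across the interfaces) is the more pedestrian but equally valid option, and having both sketched is a nice redundancy.
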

\begin{proof} See \cite[Lemma B2 and Lemma B3]{AK}.
\end{proof}
\begin{corollary}\label{Hbound} Suppose the assumption ({\bf A3}) holds. Then for all $(x,\xi,p,v) \in [0,x^*]\times]0,+\infty[\times$ $[\thetam,+\infty[ \times [0,\vmax]$, it holds that
    $$ |H(x,\xi,p)|~ \leq~ K_1 \cdot \xi, \qquad \mbox{and} \qquad |H_\xi(x,\xi,p)|~ \leq~ K_1, $$
    where the constant $\theta_{\min} >0$ is defined in (\ref{theta-min}) and
   \begin{equation}\label{k1}
   K_1 ~:=~ \max\set{ \p{\dfrac{\lambda +r}{\thetam} + \sigma^2}x^* ,~ {\thetam}^{-1} + \p{ \lambda + \mu +  \vmax}x^*}.
   \end{equation}
\end{corollary}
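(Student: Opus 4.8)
The plan is to read both estimates straight off the two--sided bounds for $H$ and $H_\xi$ recorded in Lemma~\ref{H-C1}(i), using only the constraints $p\ge\thetam>0$, $0\le x\le x^*$ and $0\le v\le\vmax$, together with the standing sign conventions on the parameters (in particular $\lambda,r,\mu\ge0$, $\sigma^2\ge0$).

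For the upper bounds, Lemma~\ref{H-C1}(i) gives $H(x,\xi,p)\le\bigl(\tfrac{\lambda+r}{p}-\lambda+\sigma^2-\mu\bigr)x\,\xi$ and $H_\xi(x,\xi,p)\le\bigl(\tfrac{\lambda+r}{p}-\lambda+\sigma^2-\mu\bigr)x$. Since $p\ge\thetam$ we have $\tfrac{\lambda+r}{p}\le\tfrac{\lambda+r}{\thetam}$, and dropping the nonpositive term $-\lambda-\mu$ the coefficient is at most $\tfrac{\lambda+r}{\thetam}+\sigma^2$; multiplying by $x\le x^*$ (and noting that when this coefficient happens to be negative the product is negative, hence a fortiori below the same quantity) yields
\[
H(x,\xi,p)\ \le\ \Bigl(\tfrac{\lambda+r}{\thetam}+\sigma^2\Bigr)x^*\,\xi\ \le\ K_1\,\xi,\qquad
H_\xi(x,\xi,p)\ \le\ \Bigl(\tfrac{\lambda+r}{\thetam}+\sigma^2\Bigr)x^*\ \le\ K_1 .
\]

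For the lower bounds, the coefficient appearing in the lower estimate of Lemma~\ref{H-C1}(i) is $\tfrac{(\lambda+r)x-1}{p}+(\sigma^2-\lambda-\mu-v)x$. Since $(\lambda+r)x\ge0$ and $p\ge\thetam$, the first term is $\ge-\tfrac1p\ge-\thetam^{-1}$; since $\sigma^2\ge0$ and $0\le v\le\vmax$, the second term is $\ge-(\lambda+\mu+\vmax)x\ge-(\lambda+\mu+\vmax)x^*$. Hence the coefficient is $\ge-\bigl(\thetam^{-1}+(\lambda+\mu+\vmax)x^*\bigr)\ge-K_1$, and multiplying by $\xi\ge0$ (respectively, leaving it as is in the case of $H_\xi$) gives $H(x,\xi,p)\ge-K_1\,\xi$ and $H_\xi(x,\xi,p)\ge-K_1$. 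Combined with the previous step, this is exactly $|H(x,\xi,p)|\le K_1\,\xi$ and $|H_\xi(x,\xi,p)|\le K_1$.

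There is essentially no obstacle here beyond bookkeeping with signs. The one point worth flagging is the asymmetry of the two arguments of the maximum in (\ref{k1}): $\sigma^2$ enters the estimate coming from the upper bound but not the one coming from the lower bound, which is precisely why both terms are kept and their maximum is taken, and which relies on $\lambda,\mu\ge0$ in order to discard $-\lambda-\mu$ when passing to the upper bound.
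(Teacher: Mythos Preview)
Your proof is correct and follows essentially the same route as the paper: both read the two-sided bounds for $H$ and $H_\xi$ directly from Lemma~\ref{H-C1}(i), then simplify using $p\ge\thetam$, $x\in[0,x^*]$, $v\in[0,\vmax]$ and the nonnegativity of $\lambda,\mu$ to arrive at the two terms in the definition of $K_1$. Your added parenthetical handling the case where the upper-bound coefficient is negative is a minor clarification not spelled out in the paper, but otherwise the arguments coincide.
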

\begin{proof}
    By Lemma \ref{H-C1} we compute an upper bound
    \begin{equation*}
        H(x,\xi,p) ~ \leq~ \p{ \dfrac{\lambda + r}{p} - \lambda + \sigma^2 - \mu}x\xi ~\leq~ \p{ \dfrac{\lambda + r}{\thetam} + \sigma^2 }x^* \xi ~\leq~ K_1 \cdot \xi
    \end{equation*}
    and lower bound
    \begin{align*}
        H(x,\xi,p) ~\geq&~
                \p{ \dfrac{ (\lambda +r)x -1}{p} + (\sigma^2 - \lambda - \mu - v(x))x} \cdot \xi  \\
         ~\geq&~
                -\p{ \dfrac{1}{\thetam} + (  \lambda + \mu + \vmax )x^*} \cdot \xi ~\geq ~ -K_1\cdot \xi,
    \end{align*}
    providing an uniform bound on $H$ with respect to $x$ and $p$. A similar analysis and application of Lemma \ref{H-C1} provides the uniform bound on $H_\xi$.
\end{proof}
\section{Optimal feedback solutions}
\setcounter{equation}{0}
In this section, we will construct a solution of the system of Hamilton-Jacobi equations (\ref{sys}) with  boundary conditions (\ref{BC}) for a given bankruptcy threshold $x^*$. In turn, this result yields the existence of optimal feedback controls for the problem of debt management (\ref{SDE})--(\ref{C1}). Finally, we show how the bankruptcy risk affects  the behavior of the optimal feedback control as the debt-to-income ratio tends to $x^*$.

\subsection{Existence results for system of Hamilton-Jacobi equations}
Before stating our main result, let us introduce the constant which will be a lower bound of the discount bond price $p$,
\begin{equation}\label{theta-min}
\thetam~:=~\min\left\{\theta(x^*), \dfrac{r+\lambda}{r+\lambda+\vmax}\right\}.
\end{equation}
\begin{theorem}\label{ext}  Under the standard assumptions  {\bf (A1) - (A3)}, the system of second order ODEs \eqref{sys} with boundary conditions \eqref{BC} admits a solution $(V,p):[0,x^*]\rightarrow [0,B]\times \big[\thetam,1\big]$ of class $C^2$ such that $V$ is monotone increasing and
\begin{equation}\label{vas}
    v(x)~=~\argmin_{w\geq 0}\{c(w)-wxV' \}~=~0  \qquad \forall \quad x\in\left[0, \dfrac{c'(0)}{M^*} \right].
\end{equation}
for a constant $M^*$ which can be explicitly computed .
\end{theorem}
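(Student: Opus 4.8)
The plan is to obtain $(V,p)$ as the limit of classical solutions of regularized, uniformly elliptic boundary value problems, following the route announced in the Introduction.

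\emph{Step 1 (regularization and approximate solutions).} Since $\rho$ blows up at $x^*$ while the diffusion coefficient $\sigma^2x^2/2$ degenerates at $x=0$, for each $n\in\mathbb N$ I would replace $\rho$ by the bounded $C^1$ truncation $\rho_n:=\min\{\rho,n\}$, extended to $x=x^*$ by $\rho_n(x^*)=n$, and $\sigma^2x^2/2$ by $\sigma^2x^2/2+n^{-1}$. The resulting system is uniformly elliptic on $[0,x^*]$ with smooth coefficients; with the boundary data \eqref{BC} it admits a $C^2$ solution $(V_n,p_n)$, obtained by a fixed point argument for the elliptic system --- decoupling it by solving the first equation for $V$ with $p$ frozen and the second for $p$ with $V$ frozen, and applying the Leray--Schauder theorem, with the a priori bounds of Steps 2--3 supplying invariance and compactness --- or, equivalently, as the large-time limit of the associated parabolic flow.

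\emph{Step 2 (uniform $L^\infty$ bounds and monotonicity).} The constants $0$ and $B$ are, respectively, a sub- and a supersolution of the first equation: by \eqref{L3}, $H(x,0,p)=0$, so $(r+\rho_n)\cdot0=0\le\rho_n B+H(x,0,p)$ and $(r+\rho_n)B=rB+\rho_n B\ge\rho_n B+H(x,0,p)$; hence $0\le V_n\le B$. Likewise, using the definition \eqref{theta-min} of $\thetam$, that $\theta$ is non-increasing (so $\theta\ge\thetam$ on $[0,x^*]$), and the bounds on $H_\xi$ in Lemma \ref{H-C1}, one checks that $\thetam$ and $1$ are a sub- and a supersolution of the second equation, so $\thetam\le p_n\le1$; in particular Corollary \ref{Hbound} applies along the approximate solutions. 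Monotonicity $V_n'\ge0$ follows from the probabilistic representation of $V_n$ as a value function --- a higher debt-to-income ratio is never beneficial --- or, alternatively, from a maximum-principle argument on the linear second-order equation satisfied by $V_n'$. All these bounds are independent of $n$.

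\emph{Step 3 (uniform gradient estimates --- the crux).} Using the explicit formulas for $H_x,H_\xi,H_p$ from Lemma \ref{H-C1}, the bounds $|H(x,\xi,p)|\le K_1\xi$ and $|H_\xi(x,\xi,p)|\le K_1$ of Corollary \ref{Hbound}, and the uniform bounds $0\le V_n\le B$, $\thetam\le p_n\le1$, I would apply the maximum principle to $V_n'$: at an interior extremum of $V_n'$ the second-order terms drop out, the positive zeroth-order coefficient $r+\rho_n$ dominates, and the remaining terms are controlled by $K_1,B,\sigma,x^*$ and $\thetam$, yielding $0\le V_n'(x)\le M^*$ on $[0,x^*]$ with an explicit $M^*$ depending only on these data; the degenerate endpoint $x=0$ is treated using that $\xi\mapsto H(0,\xi,p)$ is concave with $H(0,0,p)=0$, which prevents $V_n'$ from blowing up there. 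An analogous argument bounds $|p_n'|$ uniformly; solving each equation for the second derivative then gives uniform bounds on $V_n'',p_n''$ away from $x=0$, and a dedicated analysis near the origin (again via the structure of $H(0,\cdot,p)$) yields $C^2$ control up to $x=0$ as well.

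\emph{Step 4 (passage to the limit and conclusion).} By the Arzel\`a--Ascoli theorem and a diagonal extraction, along a subsequence $(V_n,p_n)\to(V,p)$ in $C^1([0,x^*])$ and in $C^2_{loc}(\,]0,x^*[\,)$, and $(V,p)$ solves \eqref{sys} on $\,]0,x^*[\,$ with $0\le V\le B$, $\thetam\le p\le1$, $V$ nondecreasing and $0\le V'\le M^*$. The boundary conditions at $x=0$ survive the limit because they hold for every $(V_n,p_n)$; at $x=x^*$ they are forced by the singularity: dividing the first equation of \eqref{sys} by $\rho(x)$ and letting $x\to x^{*-}$ --- with $V,V''$ and $H$ bounded and $\rho(x)\to+\infty$ --- gives $V(x^*)=B$, and the second equation similarly gives $p(x^*)=\theta(x^*)$; combined with Step 3 this makes $(V,p)$ of class $C^2$ on all of $[0,x^*]$. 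Finally, since $0\le V'(x)\le M^*$, we get $xV'(x)\le M^*x\le c'(0)$ whenever $0\le x\le c'(0)/M^*$, so by \eqref{uv-tilde} the (unique, by strict convexity of $c$) minimizer $\tilde v(x,V'(x))$ equals $0$ on that interval --- which is exactly \eqref{vas}. The main obstacles are the $n$-independent gradient bound in Step 3, in particular controlling $V_n'$ near the degenerate endpoint $x=0$ and extracting the explicit $M^*$, and in Step 4 verifying that the limit attains the boundary data at the singular point $x^*$ and is genuinely $C^2$ up to the boundary.
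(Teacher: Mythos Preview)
Your overall strategy---regularize, solve the uniformly elliptic approximate problems, extract uniform estimates, pass to the limit---is exactly the paper's, and Steps~1--2 match Lemma~\ref{Ex-app} closely. The difficulties, and the genuine gaps in your proposal, lie in Steps~3--4.

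\textbf{The $V'$ bound (Step 3).} Your interior-extremum argument does not yield the bound. At an interior maximum $x_m$ of $V_n'$ one has $V_n''(x_m)=0$; the \emph{undifferentiated} first equation then reads $H(x_m,V_n'(x_m),p_n)=rV_n(x_m)-\rho_n(x_m)(B-V_n(x_m))\le rB$, but since the asymptotic slope of $\xi\mapsto H(x,\xi,p)$ can have either sign depending on $x$ (cf.\ Lemma~\ref{H-C1}(i)), this does not bound $V_n'(x_m)$. If instead you differentiate the equation, the term $H_p(x,V_n',p_n)\,p_n'$ appears, and $p_n'$ is not yet controlled---this coupling cannot be broken by a bare maximum principle. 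The paper proceeds differently (Lemma~\ref{p-b}): on a small initial interval $[0,x_1]$ it exploits the explicit structure of $H$ to show $H(x,\xi,p)\le L(1/2)-\xi/8$ once $\xi\ge M_1$, which forces $V_\ve''>0$ there and rules out a large maximum of $V_\ve'$ on $[0,x_1]$; on $[x_1,x^*]$ it combines a mean-value point with the inequality $|V_\ve''|\le C_1+C_2V_\ve'$ (coming from $|H|\le K_1\xi$) and Gr\"onwall. This is precisely how the explicit constant $M^*$ is produced.

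\textbf{The $p'$ bound and the boundary conditions for $p$ (Steps 3--4).} Your claim of a uniform global bound on $|p_n'|$ is unsupported, and in fact the paper does \emph{not} obtain one: Lemma~\ref{p-b} bounds $p_\ve',p_\ve'',V_\ve''$ only on compact subintervals $[\delta,x^*-\delta]$, with constants that blow up as $\delta\to0$. Consequently your Step-4 arguments fail: you cannot conclude $p(0)=1$ and $p(x^*)=\theta(x^*)$ merely from $p_n(0)=1$, $p_n(x^*)=\theta(x^*)$ without equicontinuity at the endpoints, and ``dividing by $\rho(x)$'' is circular because it presupposes $p''$ (or $V''$) bounded near $x^*$. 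The paper handles the two endpoints by separate barrier arguments: near $x=0$ it constructs an explicit subsolution $p^-(x)=1-kx^\gamma$ trapping $p_\ve$ between $p^-$ and $1$; near $x=x^*$ it proves $\|p_\ve-\theta\|_{{\bf L}^\infty([x^*-\delta,x^*])}\le C_1\delta+C_2\,\sup_{|x-y|\le\delta}|\theta(x)-\theta(y)|$ uniformly in $\ve$, using the sign of $\rho_\ve(\theta-p_\ve)$ in the second equation together with a Gr\"onwall comparison. (For $V$ the situation is easier: the global bound $V_\ve'\le M^*$ gives uniform convergence of $V_\ve$ on $[0,x^*]$, so $V(0)=0$ and $V(x^*)=B$ pass directly---there is no need to divide by $\rho$.)
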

It is well-known (see \cite[Theorem 4.1]{FR} or \cite[Theorem 11.2.2]{O}) that having constructed a solution $(V,p)$ to the boundary value problem  (\ref{sys})-(\ref{BC}), then $(u^*,v^*)$ in (\ref{ufb})-(\ref{vfb}) is an optimal solution to the problem of optimal debt management (\ref{SDE})-(\ref{C1}).
As a consequence of Theorem \ref{ext}, we obtain the following result.

\begin{corollary}
Under the same assumptions of Theorem \ref{ext}, the debt management problem (\ref{SDE})-(\ref{C1}) admits an optimal control strategy in feedback form. Moreover, there exists a threshold such that the optimal control strategy does not use currency devaluation for values  below that threshold.
\end{corollary}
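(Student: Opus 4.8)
The corollary is an essentially immediate consequence of Theorem~\ref{ext} combined with the verification principle recalled just above, so the plan is to unwind the two implications in turn. First I would apply Theorem~\ref{ext} to obtain a solution $(V,p)\in C^2([0,x^*])$ of \eqref{sys} satisfying the boundary conditions \eqref{BC}, with $V:[0,x^*]\to[0,B]$ monotone increasing, $p:[0,x^*]\to[\thetam,1]$, and enjoying the extra property \eqref{vas}. I would then define the feedback pair $(u^*,v^*)$ through \eqref{ufb}--\eqref{vfb}; by the explicit formulas \eqref{uv-tilde} these take values in $[0,1]\times[0,\vmax]$, are admissible, and are continuous (indeed $C^1$, using $V,p\in C^2$ and $p\ge\thetam>0$).

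For the first assertion I would check that $(u^*,v^*,p)$ is an optimal feedback solution in the sense of Definition~\ref{OFC}. Part (ii) is direct: the second equation of \eqref{sys} together with $p(0)=1$, $p(x^*)=\theta(x^*)$ is exactly the linear (Feynman--Kac) equation whose solution represents the right-hand side of \eqref{P1} along the closed-loop dynamics \eqref{SDE} driven by $(u^*,v^*)$, so the constructed $p$ coincides with that expectation (cf.\ \cite{Shreve}). For part (i), with $p(\cdot)$ held fixed, the first equation of \eqref{sys} is the dynamic programming equation for the scalar stochastic control problem \eqref{C1}--\eqref{SDE}: its Hamiltonian is $H$ from \eqref{H1}, and the pointwise minimizers appearing in \eqref{H1} are precisely $\tilde u(V',p)=u^*$ and $\tilde v(x,V')=v^*$. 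Since $V$ is a $C^2$ solution attaining the correct boundary values $V(0)=0$, $V(x^*)=B$, the verification theorems \cite[Theorem~4.1]{FR} and \cite[Theorem~11.2.2]{O} identify $V$ with the value function and certify that the Markov feedback $(u^*,v^*)$ is optimal among \emph{all} admissible controls, not just feedback ones. This yields the existence of an optimal control strategy in feedback form.

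For the second assertion I would set $\bar x:=c'(0)/M^*$, with $M^*$ the explicit constant furnished by Theorem~\ref{ext}. Identity \eqref{vas} states precisely that $v^*(x)=\tilde v(x,V'(x))=\argmin_{w\ge 0}\{c(w)-wxV'(x)\}=0$ for every $x\in[0,\bar x]$ (concretely, the a priori derivative bound behind the construction forces $xV'(x)\le c'(0)$ on this interval, so the first branch of \eqref{vfb} is selected), i.e.\ the optimal strategy makes no use of currency devaluation below the threshold $\bar x$.

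The only substantive ingredient is Theorem~\ref{ext} itself, and in particular the a priori estimate underlying \eqref{vas}; once that is in hand the corollary is essentially bookkeeping. The one point that deserves a word of care is the applicability of the cited verification theorems, namely that the closed-loop equation \eqref{SDE} with the $C^1$ feedback $(u^*,v^*)$ admits a well-defined solution up to $\mathcal{T}_B\wedge T_{x^*}$ and attains the boundary data in the probabilistic sense (bankruptcy occurring almost surely, with the state lying in $[0,x^*]$ at that time); since the excerpt explicitly grants this step, I would simply invoke it rather than reprove it.
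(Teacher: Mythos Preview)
Your proposal is correct and follows exactly the approach the paper intends: the paper gives no separate proof of this corollary, instead remarking just before it that once a $C^2$ solution $(V,p)$ of \eqref{sys}--\eqref{BC} is in hand, the verification theorems \cite[Theorem~4.1]{FR} and \cite[Theorem~11.2.2]{O} yield optimality of the feedback pair \eqref{ufb}--\eqref{vfb}, and the threshold claim is precisely \eqref{vas}. Your write-up simply spells out these two invocations (plus the Feynman--Kac identification for $p$), which is all that is needed.
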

Toward the proof of Theorem \ref{ext}, we introduce a system of second order implicit ODEs which approximates (\ref{sys}). More precisely, for any given $\ve>0$, let $\rho_\ve$ be  a monotone increasing  Lipschitz function on $[0,x^*]$ defined by
\begin{equation}\label{rho-ve}
    \rho_{\ve}(x)~=~
    \begin{cases}
    \rho(x) \ \ \ &\mbox{if}\qquad x\in[0, x_{\ve}]\\[1.8ex]
   \ds \frac{1}{\ve} &\mbox{if}\qquad  x\in[x_{\ve}, x^*]
    \end{cases}
 \qquad\mathrm{with}\qquad x_{\ve}~:=~ \rho^{-1}\left(\frac{1}{\ve}\right).
\end{equation}
Consider the following system of implicit ODEs
\begin{equation}\label{sysel}
    \begin{cases}
    (r+\rho_\ve(x))\cdot V~= ~\rho_\ve(x)\cdot B+H(x,V',p+\ve) + \Big(\dfrac{\sigma^2x^2}{2}+\ve\Big)\cdot V'' \,, \\[1.5ex]
    \begin{aligned}
    (r + \lambda + \tilde{v}(x,V'))\cdot p- (r + \lambda)~&=~ \rho_\ve(x)   \cdot [ \theta(x) - p] \\[0.8ex]
    &+ H_\xi(x, V', p+\ve) \cdot p' + \p{\dfrac{(\sigma x )^2}{2}+\ve } \cdot p'' \,,
    \end{aligned}
    \end{cases}
\end{equation}
where the function
\begin{align}\label{vtilde}
\tilde{v}(x,\xi)~=~
    \begin{cases}
    0, \ \ \ \ &\mbox{if} \qquad \xi x \leq c'(0),\\[1.2ex]
    (c')^{-1}(\xi x), &\mbox{if} \qquad \xi x > c'(0).
    \end{cases}
\end{align}
From the assumption {\bf (A3)}, one can easily get that
\begin{equation}\label{cond-v}
\tilde{v}(x,\xi)~\leq~\vmax,\qquad |\tilde{v}_x(x,\xi)|~\leq~\frac{1}{\delta_0}\cdot |\xi|\qquad \mbox{and}\qquad |\tilde{v}_\xi(x,\xi)|~\leq~\frac{1}{\delta_0}\cdot |x|.
\end{equation}
We  establish an existence result for $(\ref{sysel})$ with boundary condition (\ref{BC}) by considering the auxiliary  parabolic system whose
steady states will provide a solution to $(\ref{sysel})$. Following \cite{Amann}, we shall construct a compact, convex and positively invariant
set of functions $(V,p): [0, x^*]^2\mapsto [0,B]\times [\thetam,1]$. A topological technique will then yield the existence of solutions $(V_{\ve},p_{\ve})$ of the system (\ref{sysel}).
\begin{lemma}\label{Ex-app} Assume that  {\bf (A1) - (A3)}  hold. Then the system of ODEs (\ref{sysel}) with boundary conditions (\ref{BC}) admits a $C^2$ solution $(V_{\ve},p_{\ve}): [0,x^*]^2\to [0,B]\times [\thetam,1]$ such that $V_{\ve}$ is increasing.
\end{lemma}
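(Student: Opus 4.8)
The plan is to realize the desired solution $(V_\ve,p_\ve)$ as the steady state of an auxiliary parabolic system, following the invariant-region method of Amann. First I would rewrite the elliptic system \eqref{sysel}, which is uniformly elliptic on $[0,x^*]$ for each fixed $\ve>0$ (the diffusion coefficient $\frac{\sigma^2x^2}{2}+\ve$ is bounded below by $\ve$), in the form of a reaction–diffusion system for the unknown $(V,p)$. Writing $D_\ve(x)=\frac{\sigma^2x^2}{2}+\ve$, set
\[
\partial_t V ~=~ D_\ve(x)\,V_{xx} + H(x,V_x,p+\ve) + \rho_\ve(x)(B-V) - rV \,=:\, F_1(x,V,p,V_x,V_{xx}),
\]
\[
\partial_t p ~=~ D_\ve(x)\,p_{xx} + H_\xi(x,V_x,p+\ve)\,p_x + \rho_\ve(x)(\theta(x)-p) - (r+\lambda+\tilde v(x,V_x))p + (r+\lambda) \,=:\, F_2,
\]
on $[0,x^*]$ with the boundary conditions \eqref{BC} held fixed in time and an arbitrary smooth initial datum $(V^0,p^0)$ taking values in the box $\mathcal{B}:=[0,B]\times[\thetam,1]$ and compatible with \eqref{BC}.

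Next I would verify that the rectangle $\mathcal{B}$ is positively invariant. This is the heart of the argument and where I expect the bookkeeping to be most delicate. On the face $V=0$ one has $F_1 = D_\ve V_{xx} + H(x,V_x,p+\ve) + \rho_\ve B$; using $H(x,0,\cdot)=0$ from \eqref{L3} together with a maximum-principle/boundary-point argument (at an interior minimum $V_x=0$ and $V_{xx}\ge 0$), one gets that $V$ cannot cross below $0$; on the face $V=B$ one uses $\rho_\ve(B-V)=0$ and $-rB<0$ together with concavity bounds on $H$ from Lemma \ref{H-C1} to push $V$ back below $B$. For the $p$-component: on $p=1$ note $\rho_\ve(\theta(x)-p)\le 0$ since $\theta\le 1$, and $-(r+\lambda+\tilde v)p+(r+\lambda) = -(r+\lambda)(p-1) - \tilde v\, p \le 0$, so $p$ stays below $1$; on $p=\thetam$ the definition \eqref{theta-min} of $\thetam$ is exactly what makes $\rho_\ve(\theta(x)-\thetam)\ge 0$ and $-(r+\lambda+\vmax)\thetam+(r+\lambda)\ge 0$, so the inward sign holds. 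One must check these inequalities are compatible with the mixed second-order terms — here I invoke the standard fact (Amann, or Chueh–Conley–Smoller) that for a rectangle whose faces are level sets of single components, invariance follows from the sign conditions on the reaction terms alone, since the diffusion and first-order transport terms respect such a box.

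Having established invariance, I would apply parabolic regularity theory (Schauder estimates, using that $H,H_\xi$ are $C^1$ by Lemma \ref{H-C1} and that on the invariant box $p+\ve$ stays in a compact subset of $]0,\infty[$ so all coefficients are uniformly Hölder) to obtain a priori $C^{2+\alpha}$ bounds on the flow, hence global existence of the parabolic evolution and a compact, convex, positively invariant set of functions in, say, $C^{1+\alpha}([0,x^*])^2$. A topological fixed-point argument — either passing to the $\omega$-limit via a Lyapunov/LaSalle argument, or directly applying Schauder's fixed point theorem to the time-$1$ map, or the Leray–Schauder degree as in \cite{Amann} — then produces a steady state $(V_\ve,p_\ve)\in C^2([0,x^*])^2$ solving \eqref{sysel} with \eqref{BC} and taking values in $[0,B]\times[\thetam,1]$.

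Finally, monotonicity of $V_\ve$: differentiate the first equation of \eqref{sysel}, or argue directly that $V_\ve' \ge 0$. Consider $w=V_\ve'$; at an interior point where $w$ attains a negative minimum one has $w'=V_\ve''=0$ there, and plugging into the first equation gives $(r+\rho_\ve)V_\ve = \rho_\ve B + H(x,w,p_\ve+\ve)$; combined with the lower bound $0\le V_\ve\le B$ and the bound $H(x,\xi,p)\ge \big(\frac{(\lambda+r)x-1}{p}+(\sigma^2-\lambda-\mu-v)x\big)\xi$ from Lemma \ref{H-C1}(i), one derives a contradiction for $\ve$ in the relevant range, or more robustly one shows $V_\ve'(0)\ge0$ and $V_\ve'(x^*)\ge 0$ from the boundary conditions $V_\ve(0)=0\le V_\ve(x)$, $V_\ve(x^*)=B\ge V_\ve(x)$ and then runs a comparison argument. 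I expect the invariance verification for the $p$-equation — tracking the interplay of $\rho_\ve$, $\theta$, $\tilde v$, and the transport term $H_\xi p_x$ near the faces — to be the main obstacle; everything else is standard parabolic machinery.
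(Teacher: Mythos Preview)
Your overall strategy---parabolic auxiliary system, invariant rectangle, Amann's steady-state theorem---is exactly the paper's, and your verification that $[0,B]\times[\thetam,1]$ is invariant is correct and matches the paper's super/subsolution computations.

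The gap is in the monotonicity step. Your contradiction argument at an interior negative minimum of $w=V_\ve'$ does not close. At such a point $w(x_0)<0$, but Lemma~\ref{H-C1}(i) is stated only for $\xi\in[0,\infty[$; for $\xi<0$ one has $\tilde u=\tilde v=0$ and hence $H(x,\xi,p)=\big(\tfrac{\lambda+r}{p}-\lambda+\sigma^2-\mu\big)x\xi$, and substituting this into $(r+\rho_\ve)V_\ve=\rho_\ve B+H$ with $V_\ve\in[0,B]$ yields no contradiction (both sides can be negative). The differentiated elliptic equation fares no better, because it contains the cross-term $H_p(x,w,p_\ve+\ve)\,p_\ve'$, and at a point where $w<0$ one has $H_p=-x(\lambda+r)\tfrac{w}{(p_\ve+\ve)^2}>0$ while the sign of $p_\ve'$ is unknown.

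The paper avoids this by building monotonicity into the invariant domain: take
\[
\D~=~\Big\{(V,p)\in C^2:~V'\ge 0,~(V,p)([0,x^*])\subset[0,B]\times[\thetam,1],~\eqref{BC}\text{ holds}\Big\},
\]
and show $\D$ is preserved by the parabolic flow. Differentiating the first parabolic equation gives a linear parabolic equation for $\mathbf W=\mathbf V_x$ (equation~\eqref{W} in the paper). The crucial observation is that, by the explicit formulas in Lemma~\ref{H-C1}, both $H_x$ and $H_p$ carry a factor of $\xi$, so $H_x(x,0,p)=H_p(x,0,p)=0$. Hence the constant function $0$ is a subsolution of the $\mathbf W$-equation \emph{regardless of the sign of $\mathbf p_x$}, the term $H_p\,\mathbf p_x$ simply vanishes at $\mathbf W=0$, and parabolic comparison gives $\mathbf W\ge 0$. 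This is exactly what neutralizes the cross-term your post-hoc elliptic argument cannot control. With $V'\ge 0$ included in $\D$, the bounds of Lemma~\ref{H-C1} and Corollary~\ref{Hbound} (which require $\xi\ge 0$) become available along the entire flow, and Amann's Theorem~3 then delivers a steady state in $\D$, automatically monotone.
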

\begin{proof} Given a threshold of bankruptcy $x^*>0$, let's consider the parabolic system with the unknown ${\bf V}(t,x)$ and ${\bf p}(t,x)$
\begin{equation}\label{syspa}
    \begin{cases}
    {\bf V}_t~=~-(r+\rho_\ve(x)){\bf V}+ ~\ds \rho_\ve(x)\cdot B+H(x,{\bf V}_x,{\bf p}+\ve) + \Big(\dfrac{\sigma^2x^2}{2}+\ve\Big) {\bf V}_{xx}\,, \\[10pt]
    \begin{aligned}
    {\bf p}_t~=~-(r + \lambda + \widetilde{v}(x,{\bf V}_x))\cdot {\bf p} &+ (r + \lambda) + \rho_\ve(x)   \cdot [ \theta(x) - {\bf p}] + \\ & H_\xi(x, {\bf V}_x, {\bf p}+\ve) \cdot {\bf p}_x+ \Big(\dfrac{(\sigma x )^2}{2}+\ve\Big)\cdot {\bf p}_{xx} \,,
    \end{aligned} 
    \end{cases}
\end{equation}
with the boundary conditions
\begin{equation}\label{BC1}
\begin{cases}
{\bf V}(t,0)~=~0 \\[1.5ex]
{\bf V}(t,x^*)~=~B
\end{cases}
\qquad\mathrm{and}~~\qquad
\begin{cases}
{\bf p}(t,0)~=~1\\[1.5ex]
{\bf p}(t,x^*)~=~\theta(x^*).
\end{cases}
\end{equation}
It is well-known (see \cite[Theorem 1]{Amann}) that the parabolic system (\ref{syspa}) with initial data
\begin{equation}\label{id}
{\bf V}(0,x)~=~V_0(x) \qquad\mbox{and} \qquad {\bf p}(0,x)~=~p_0(x),
\end{equation}
admits a unique solution $({\bf V}(t,x),{\bf p}(t,x))\in C^2([0,T]\times [0,x^*])\times C^2([0,T]\times [0,x^*])$ for any $T>0$. Adopting a semigroup notation, we denote by $S_t(V_0,p_0)=({\bf V}(t,\cdot),{\bf p}(t,\cdot))$ the solution to the system \eqref{syspa} at time $t$ with initial data \eqref{id}.
\medskip \\
We claim that the following closed and convex domain in $C^2([0,x^*])\times C^2([0,x^*])$
\begin{equation*}
\D~=~\Big\{ (V,p):[0,x^*]^2 \rightarrow [0,B]\times [\thetam,1]: (V,p)\in C^2, V' \geq 0,~ \mbox{and}~\eqref{BC}~\mbox{holds}\Big\},
\end{equation*}
is positively invariant under the semigroup $S_t$, namely
\begin{equation*}
S_t(\D)~\subseteq~\D \qquad \forall ~ t \geq 0 \,.
\end{equation*}
Indeed, as in \cite{BMNP}, we consider the constant functions $({\bf V}^{\pm},{\bf p}^{\pm})$ defined on $[0,\infty[\times [0,x^*]$ such that
\[
({\bf V}^+,{\bf p}^+)~\equiv~(B,1)\qquad\mathrm{and}\qquad ({\bf V}^-,{\bf p}^-)~\equiv~(0,\thetam)\,.
\]
Recalling \eqref{L3}, one has
\begin{align*}
-(r+\rho_\ve(x)){\bf V}^+ + \rho_\ve(x)\cdot B+ H(x,{\bf V}_x^+,{\bf p}+\ve) + \Big(\dfrac{\sigma^2x^2}{2}+\ve\Big) {\bf V}_{xx}^+ \\[1.5ex]
~=~-(r+\rho_\ve(x)){\bf V}^++ \rho_\ve(x)\cdot B~=~-rB~\leq~0
\end{align*}
and
\[
-(r+\rho_\ve(x)){\bf V}^- + \rho_\ve(x)\cdot B+ H(x,{\bf V}_x^-,{\bf p}+\ve) + \Big(\dfrac{\sigma^2x^2}{2}+\ve\Big) {\bf V}_{xx}^-~=~ \rho_\ve(x)\cdot B~\geq~0.
\]
This implies that ${\bf V}^+$ is a supersolution and ${\bf V}^-$ is a subsolution of the first parabolic equation in \eqref{syspa}. A  standard comparison principle \cite{WW} yields
\[
    0~=~{\bf V}^-(t,x)~\leq~{\bf V}(t,x)~\leq~{\bf V}^+(t,x)~=~B\qquad\forall (t,x)\in [0,+\infty[ \times [0,x^*].
\]
Similarly, from ({\bf A1})-({\bf A3}), (\ref{theta-min}) and (\ref{vtilde}), it holds
\begin{align*}
-(r + \lambda + \widetilde{v}(x,{\bf V}_x))\cdot {\bf p^+} + (r + \lambda) + \rho_\ve(x)   \cdot [ \theta(x) - {\bf p}^+] +  H_\xi(x, {\bf V}_x, {\bf p}^++\ve) \cdot {\bf p}^+_x\\[1.5ex]
+ \Big(\dfrac{(\sigma x )^2}{2}+\ve\Big)\cdot {\bf p}^+_{xx}~=~ - \tilde{v}(x,{\bf V}_x) +\rho_\ve(x)   \cdot [ \theta(x) -1]~\leq~0
\end{align*}
and
\begin{multline*}
-(r + \lambda + \widetilde{v}(x,{\bf V}_x))\cdot {\bf p^-} + (r + \lambda) + \rho_\ve(x)   \cdot [ \theta(x) - {\bf p}^-] +  H_\xi(x, {\bf V}_x, {\bf p}^-+\ve) \cdot {\bf p}^-_x\\+ \Big(\dfrac{(\sigma x )^2}{2}+\ve\Big)\cdot {\bf p}^-_{xx}~=~-(r + \lambda + \widetilde{v}(x,{\bf V}_x))\cdot\thetam+(r + \lambda) \\+ \rho_\ve(x)   \cdot [ \theta(x) -\thetam]
~\geq~(r+\lambda)-(r+\lambda+\vmax)\cdot \thetam~\geq~0.
\end{multline*}
Thus, ${\bf p}^+$ is a supersolution and ${\bf p}^-$ is a subsolution of the second parabolic equation in \eqref{syspa}, and this yields
\[
\thetam~=~{\bf p}^-(t,x)~\leq~{\bf p}(t,x)~\leq~{\bf p}^+(t,x)~=~1\qquad\forall (t,x)\in [0,+\infty[\times [0,x^*].
\]
Setting ${\bf W}(t,x) := {\bf V}_{x}(t,x)$ with initial condition  ${\bf V}(0,x) = V_0(x) \in \mathcal D$, we have
\begin{equation*}
\lim_{x \to 0^+}{\bf W}(t,x) \geq 0, \quad \mbox{and} \quad \lim_{x \to x^{*-}}{\bf W}(t,x) \geq 0, \quad \forall t \in ]0, + \infty [ \,,
\end{equation*}
and, by definition of $\mathcal D$,
\[
 {\bf W}(0,x) ~\geq~ 0, \qquad \forall x \in [0,x^*].
\]
Differentiating the first equation in \eqref{sysel}, we obtain that $\bf{W}$ solves the following ODE
\begin{equation}\label{W}
  {\bf W}_t ~=~ -(r + \rho_\ve){\bf W} + \rho_\ve'(B - {\bf V } ) + H_x + H_\xi {\bf W}_{x} + H_p {\bf p}_{x} + \sigma^2x {\bf W}_{x} + \p{\dfrac{\sigma^2x^2}{2} + \ve}{\bf W}_{xx}.
\end{equation}
Since
\[
    (H_x,H_{p})(x,0,{\bf p}+\ve)~=~0 \qquad \mathrm{and} \qquad {\bf V}(t,x)~\leq~B\qquad\forall(t,x)\in [0,+\infty[\times [0,x^*],
\]
one can easily see that the constant function $0$ is a subsolution to \eqref{W}. Thus, by \cite{WW},
\[
{\bf W}(t,x)~\geq~0\qquad\forall(t,x)\in [0,+\infty[\times [0,x^*]\,,
\]
yielding the monotone increasing property of ${\bf V}$ with respect to $x$.
From the bounds in Lemma \ref{H-C1} and the invariance of $\D$, we can apply \cite[Theorem 3]{Amann} to obtain the existence of a steady state solution $(V_\ve, p_\ve) \in \D$ for the system \eqref{syspa} which solves (\ref{sysel}) and  (\ref{BC}) such that $V_\ve$ is monotone increasing.
\end{proof}
In order to obtain a solution to (\ref{sys}) from  (\ref{sysel}) by passing $\ve$ to $0+$, we  derive some a priori estimates on the derivatives of $(V_{\ve},p_{\ve})$.

\begin{lemma}\label{p-b} Under the same assumptions in Lemma \ref{Ex-app}, let $(V_\ve,p_\ve)$ be a solution to \eqref{sysel} and \eqref{BC}. Then for every $0<\ve<1/2$, it holds
\begin{equation}\label{b-V'}
\|V'_{\ve}\|_{{\bf L}^{\infty}([0,x^*])}~\leq~M^*\qquad\mathrm{and}\qquad v_{\ve}(x)~=~0\qquad\forall x\in \left[0,{c'(0)\over M^*}\right],
\end{equation}
where the constant $M^*$ is explicitly computed by
$$
M^*~:=~\max\left\{M_1,\exp\left({3K_1x^*\over 2\sigma^2x_1^2}\right)\cdot \left({4B\over x^*}+{B\over K_1}\cdot {\rho\left(3x^*\over 4\right)}\right), \exp\left({2K_1x^*\over \sigma^2 x_1^2}\right)\cdot\left({4B\over x^*}+{rB\over K_1}\right)\right\},
$$
with $K_1$ defined in (\ref{k1}), and
\begin{equation}\label{conts}
x_1~:=~\min\set{ \dfrac{1}{6(\lambda + r + \sigma^2)}, \dfrac{x^*}{2}},\qquad M_1~:=~8\p{ L\p{\frac{1}{2}} + \rho(x_1)\cdot B}.
\end{equation}
Moreover, for any $\delta\in ]0,x^*/2[$, there exists a constant $M_{\delta}>0$ such that
\begin{equation}\label{bbb}
\|V''_{\ve}\|_{{\bf L}^{\infty}(]\delta,x^*-\delta[)}+\|p'_{\ve}\|_{{\bf L}^{\infty}(]\delta,x^*-\delta[)}+\|p''_{\ve}\|_{{\bf L}^{\infty}(]\delta,x^*-\delta[)}~\leq~M_{\delta}.
\end{equation}
\end{lemma}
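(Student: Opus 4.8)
The strategy is to bootstrap the estimates already obtained in Lemma~\ref{Ex-app} — $0\le V_\ve\le B$ with $V_\ve$ increasing, and $\thetam\le p_\ve\le 1$ — into derivative bounds by reading the two equations in \eqref{sysel} as linear differential inequalities. The two facts that make this work are: the growth bounds $|H(x,\xi,p)|\le K_1\xi$ and $|H_\xi(x,\xi,p)|\le K_1$ of Corollary~\ref{Hbound}, which apply since $p_\ve+\ve\ge\thetam$; and the observation that away from $x=0$ and $x=x^*$ the diffusion coefficient $\tfrac{\sigma^2x^2}{2}+\ve$ is bounded below while $\rho_\ve$ is bounded above — indeed, by \eqref{rho-ve} and monotonicity of $\rho$, one has $\rho_\ve(x)\le\rho(a)$ for every $\ve>0$ whenever $x\le a<x^*$.

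For the bound on $V'_\ve$: since $V_\ve$ is increasing with range in $[0,B]$, the mean value theorem on $[x^*/2,3x^*/4]$ yields an interior point $\bar x\in(x^*/2,3x^*/4)$ with $V'_\ve(\bar x)\le 4B/x^*$. Writing the first equation of \eqref{sysel} as
\[
\Bigl(\tfrac{\sigma^2x^2}{2}+\ve\Bigr)V''_\ve~=~(r+\rho_\ve)V_\ve-\rho_\ve B-H(x,V'_\ve,p_\ve+\ve),
\]
and bounding the right-hand side with $0\le V_\ve\le B$, $V'_\ve\ge 0$ and $|H|\le K_1V'_\ve$, one gets — using $\tfrac{\sigma^2x^2}{2}+\ve\ge\tfrac{\sigma^2x_1^2}{2}$ for $x\ge x_1$ and $\rho_\ve\le\rho(3x^*/4)$ on $[0,3x^*/4]$ — the one-sided linear inequalities $V''_\ve\le\tfrac{2}{\sigma^2x_1^2}(rB+K_1V'_\ve)$ on $[x_1,x^*]$ and $V''_\ve\ge-\tfrac{2}{\sigma^2x_1^2}(\rho(3x^*/4)B+K_1V'_\ve)$ on $[x_1,3x^*/4]$. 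A Gronwall estimate applied to the affine shifts $V'_\ve+\tfrac{rB}{K_1}$ (propagating rightward from $\bar x$ to $x^*$) and $V'_\ve+\tfrac{\rho(3x^*/4)B}{K_1}$ (propagating leftward from $\bar x$ to $x_1$), with the exponents tracked explicitly, yields $\|V'_\ve\|_{{\bf L}^\infty([x_1,x^*])}$ bounded by the second and third terms in the definition of $M^*$.

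On $[0,x_1]$ the diffusion degenerates, so a different argument is needed. Let $\bar x\in[0,x_1]$ attain $\max_{[0,x_1]}V'_\ve$. If $\bar x=x_1$, this maximum is already controlled by the previous step. Otherwise $\bar x$ is a local (or endpoint, at $0$) maximum of $V'_\ve$, so $V''_\ve(\bar x)\le 0$, and the first equation of \eqref{sysel} gives $H(\bar x,V'_\ve(\bar x),p_\ve(\bar x)+\ve)\ge (r+\rho_\ve(\bar x))V_\ve(\bar x)-\rho_\ve(\bar x)B\ge -\rho(x_1)B$. On the other hand, choosing $u=\tfrac12$, $v=0$ in the minimum defining $H$ shows that for $x\le x_1$ and $\xi\ge 0$,
\[
H(x,\xi,p_\ve+\ve)~\le~L\!\Bigl(\tfrac12\Bigr)+\frac{\xi}{p_\ve+\ve}\Bigl((\lambda+r)x-\tfrac12\Bigr)+(\sigma^2-\lambda-\mu)x\,\xi~\le~L\!\Bigl(\tfrac12\Bigr)-\kappa\,\xi
\]
for a constant $\kappa>0$: the choice $x_1\le\tfrac1{6(\lambda+r+\sigma^2)}$ keeps $(\lambda+r)x$ and $\sigma^2x$ below $\tfrac16$ on $[0,x_1]$, so the coercive term $-\xi/(2(p_\ve+\ve))$ dominates the drift (note $p_\ve+\ve\le\tfrac32$ since $\ve<\tfrac12$). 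Combining the two inequalities at $\bar x$ bounds $\|V'_\ve\|_{{\bf L}^\infty([0,x_1])}$ by $M_1$. Hence $\|V'_\ve\|_{{\bf L}^\infty([0,x^*])}\le M^*$; and since $V'_\ve(x)\,x\le M^*x\le c'(0)$ for $x\le c'(0)/M^*$, the formula \eqref{vtilde} gives $v_\ve(x)=\tilde v(x,V'_\ve(x))=0$ on that interval.

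For the interior bounds \eqref{bbb}, fix $\delta\in\,]0,x^*/2[$, on which $\tfrac{\sigma^2x^2}{2}+\ve\ge\tfrac{\sigma^2\delta^2}{2}$ and $\rho_\ve\le\rho(x^*-\delta)$. The first equation of \eqref{sysel}, together with $\|V'_\ve\|_\infty\le M^*$, $V_\ve\le B$ and $|H|\le K_1M^*$, bounds $\|V''_\ve\|_{{\bf L}^\infty(]\delta,x^*-\delta[)}$. Rewriting the second equation as $\bigl(\tfrac{\sigma^2x^2}{2}+\ve\bigr)p''_\ve=(r+\lambda+\tilde v)p_\ve-(r+\lambda)-\rho_\ve(\theta-p_\ve)-H_\xi(x,V'_\ve,p_\ve+\ve)p'_\ve$, the first three terms are bounded on $]\delta,x^*-\delta[$ and $|H_\xi|\le K_1$, so $|p''_\ve|\le C_\delta(1+|p'_\ve|)$; since $\thetam\le p_\ve\le 1$, the mean value theorem gives an interior point where $|p'_\ve|$ is controlled, and a Gronwall argument on $1+|p'_\ve|$ then bounds $\|p'_\ve\|_{{\bf L}^\infty(]\delta,x^*-\delta[)}$, after which the second equation bounds $\|p''_\ve\|_{{\bf L}^\infty(]\delta,x^*-\delta[)}$. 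The main obstacle is the estimate near $x=0$: the degeneracy of the diffusion there prevents any barrier or Gronwall argument from propagating a bound in from the interior, so one must exploit the precise coercivity of the running cost $L$ encoded in $H$ — which is precisely why the threshold $x_1$, and with it $M^*$, has the specific form stated.
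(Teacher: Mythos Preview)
Your proposal is correct and follows essentially the same approach as the paper's proof: a maximum-principle/coercivity argument on $[0,x_1]$ using the test control $u=\tfrac12$, a mean-value-plus-Gronwall argument on $[x_1,x^*]$ (the paper applies the MVT on $[x_1,3x^*/4]$ rather than $[x^*/2,3x^*/4]$, but this yields the same bound $4B/x^*$ and the same exponents), and then the direct estimate plus MVT/Gronwall for the interior bounds on $V''_\ve$, $p'_\ve$, $p''_\ve$. One minor point: to land exactly on $M_1=8\bigl(L(\tfrac12)+\rho(x_1)B\bigr)$ you need $\kappa=\tfrac18$, which the paper obtains by absorbing the $\sigma^2 x\xi$ term into the fraction via $p_\ve+\ve\le\tfrac32$ to write $H\le L(\tfrac12)+\dfrac{3(\lambda+r+\sigma^2)x-1}{2(p_\ve+\ve)}\,\xi\le L(\tfrac12)-\dfrac{\xi}{8}$ on $[0,x_1]$; your looser bookkeeping (keeping $\sigma^2 x\xi$ separate) gives a smaller $\kappa$ and hence a constant larger than $M_1$, so a line of sharper arithmetic is needed to match the stated $M^*$.
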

\begin{proof} {\bf 1.} Let us first prove (\ref{b-V'}). Let $x_1$ and $M_1$ be as in \eqref{conts}. From (\ref{H1}) and Lemma \ref{Ex-app}, for every $(x,\xi)\in [0,x_1]\times [M_1,\infty[$, it holds
\begin{eqnarray*}
 H(x,\xi,p_{\ve}+\ve)&\leq&\min_{u \in[0,1]} \set{ L(u) - \dfrac{u}{p_{\ve}+\ve} \xi } + \p{ \dfrac{\lambda +r}{p_{\ve} + \ve} + \sigma^2 }x\xi\cr\cr
 &\leq& L\left({1\over 2}\right)+\dfrac{3(\lambda +r + \sigma^2)x - 1 }{2(p_{\ve} + \ve)} \cdot \xi\cr\cr
 &\leq&L\left({1\over 2}\right)-{\xi\over 8}~\leq~L\left({1\over 2}\right)-{M_1\over 8}\,.
 \end{eqnarray*}
 From the definition of $\rho_{\ve}$ in (\ref{rho-ve}), one has
 \begin{equation}\label{p-pve}
 \rho_{\ve}(x)~\leq~\rho(x)\qquad\forall x\in [0,x^*],
 \end{equation}
 and  {\bf (A2)} implies that
 \[
 \rho_{\ve}(x)B+ H(x,\xi,p_{\ve}+\ve)~\leq~\rho(x_1)B+L\left({1\over 2}\right)-{M_1\over 8}~<~0,
 \]
 for all $(x,\xi)\in [0,x_1]\times [M_1,\infty[$. Since $V_{\ve}$ is non-negative, the first equation of (\ref{sysel}) yields
 \[
 V_{\ve}''(x)~>~0\qquad\forall x\in ]0,x_1],
 \]
 provided that $V_{\ve}'(x)\geq M_1$ for all $x\in [0,x_1]$. Recalling from Lemma \ref{Ex-app} that $V_{\ve}'$ is non-negative, we have that   $\|V'_{\ve}\|_{{\bf L}^{\infty}([0,x_1]) }$ is bounded by $M_1$ or the maximal of $V_{\ve}'$ in $[0,x^*]$ is obtained at
 \[
x_m~:=~\argmax_{x \in [0,x^*]} V'_\ve(x)~>~x_1\,.
 \]
Let us establish an upper bound of $V'_{\ve}$ in $[x_1,x^*[$. Since $0\leq x_1\leq {x^*\over 2}$, by the mean value theorem, there exists a point $x_2 \in  ]x_1,{3\over 4}{x^*}[$ such that
\begin{equation}\label{mvt}
    V_\ve'(x_2)~=~ \dfrac{V_\ve(\frac{3}{4} x^*) - V_\ve(x_1)}{\frac{3}{4}x^*- x_1}~\leq~{B\over {3\over 4}x^*-{1\over 2}x^*}~\leq~\dfrac{4B}{x^*} \,.
\end{equation}
From the first equation of \eqref{sysel} we have
\begin{equation}\label{V''-1}
V''_\ve(x)~=~\dfrac{2}{\sigma^2  x^2 + 2 \ve}\cdot \left[ rV_\ve(x) + \rho_\ve(x) \p{ V_\ve(x) - B}   - H(x,V'_\ve(x),p_\ve(x) + \ve)\right].
\end{equation}
Two cases are considered:
\begin{itemize}
\item For any $x \in [x_1,x_2]$, from (\ref{p-pve}), {\bf (A2)}, and the above equality we have
\begin{align*}
    V''_\ve(x) &~\geq~
    \dfrac{-2}{\sigma^2  x^2 + 2 \ve}\cdot  \p{\rho_\ve(x)  B+ \left| H(x,V'_\ve(x),p_\ve(x) + \ve) \right| }\\
    &~\geq~{-2\over \sigma^2x_1^2}\cdot \p{\rho\left({3x^*\over 4}\right)  B+ \left| H(x,V'_\ve(x),p_\ve(x) + \ve) \right| },
\end{align*}
where the last inequality holds because $x_2\leq {3x^*\over 4}$. Since $p_{\ve}(x)\geq \theta_{\min}$, by Lemma \ref{Hbound} it holds that
\begin{equation*}
   V''_\ve(x) 
     ~\geq~ \dfrac{-2}{\sigma^2  x_1^2 } \p{ \rho\left({\frac{3x^*}{ 4}}\right)  B     + K_1 \cdot V'_\ve(x) },
     \end{equation*}
where the constant $K_1$ is defined in (\ref{k1}). Thus, applying Gr\"onwall's inequality in the interval $[x,x_2]$ with $x\in [x_1,x_2]$, we get
\begin{equation*}
V'_{\ve}(x)~\leq~  \left(V_{\ve}'(x_2)+{B\over K_1}\cdot {\rho\left(3x^*\over 4\right)}\right) \cdot \exp \left( {2K_1\over \sigma^2x_1^2}(x_2-x) \right) -{B\over K_1}\cdot {\rho\left(3x^*\over 4\right)}.
\end{equation*}
Recalling (\ref{mvt}) we obtain that
\begin{equation}\label{b-x1-x2}
\|V'_{\ve}\|_{{\bf L}^{\infty}([x_1,x_2])}~\leq~ \left({4B\over x^*}+{B\over K_1}\cdot {\rho\left(3x^*\over 4\right)}\right) \cdot \exp\left({3K_1x^*\over 2\sigma^2x_1^2}\right).
\end{equation}
\item Similarly, for any $x\in [x_2,x^*]$, it holds
\begin{eqnarray*}
V''_{\ve}(x)&\leq&\dfrac{2}{\sigma^2  x_2^2 }  \p{ rB +  K_1 \cdot V'_\ve(x)}~\leq~ \dfrac{2}{\sigma^2  x_1^2 }  \p{ rB +  K_1 \cdot V'_\ve(x)}
\end{eqnarray*}
and Gr\"onwall's inequality implies that
\begin{equation*}
V'_{\ve}(x)~\leq~ \left(V'_{\ve}(x_2)+{rB\over K_1}\right) \cdot \exp\left({2K_1\over \sigma^2 x_1^2}(x-x_2)\right)-{rB\over K_1}.
\end{equation*}
Thus, (\ref{mvt}) yields
\begin{equation}\label{b-x2-x*}
\|V'_{\ve}\|_{{\bf L}^{\infty}([x_2,x^*])}~\leq~ \left({4B\over x^*}+{rB\over K_1}\right)\cdot \exp\left({2K_1x^*\over \sigma^2 x_1^2}\right).
\end{equation}
\end{itemize}
Therefore, combining (\ref{conts}), (\ref{b-x1-x2}), (\ref{b-x2-x*}) and (\ref{vtilde}), we obtain (\ref{b-V'}).
\v
{\bf 2.} For any fixed $0<\delta<{x^*\over 2}$, we will provide uniform bounds on $\norm{V''_\ve}_{\L^\infty([\delta,x^* - \delta])}$, $\norm{p'_\ve}_{\L^\infty([\delta,x^* - \delta])}$, and  $\norm{p''_\ve}_{\L^\infty([\delta,x^* - \delta])}$. From (\ref{V''-1}), (\ref{p-pve}) and Lemma  \ref{Hbound}, it holds
\begin{eqnarray*}
|V_{\ve}''(x)|&\leq&{2\over \sigma^2\delta^2}\cdot\left[(r+\rho(x^*-\delta))\cdot B+\big|H(x,V'_\ve(x),p_\ve(x) + \ve)\big|\right]\cr\cr
&\leq&{2\over \sigma^2\delta^2}\cdot\left[(r+\rho(x^*-\delta))\cdot B+K_1\cdot V'_{\ve}(x)\right] ,
\end{eqnarray*}
for all $x\in [\delta,x^*-\delta]$. Thus, (\ref{b-V'}) implies that
\begin{equation}\label{b-V''}
\|V_{\ve}''\|_{{\bf L}^{\infty}([\delta,x^*-\delta])}~\leq~{2\over \sigma^2\delta^2}\cdot\left[(r+\rho(x^*-\delta))\cdot B+K_1M^*\right].
\end{equation}
Similarly, from the second equation in (\ref{sysel}), it holds that
\begin{equation}\label{p''-1}
|p_{\ve}''(x)|~\leq~{2\over \sigma^2\delta^2}\cdot\left[K_1\cdot |p'_{\ve}(x)|+ (r+\lambda+v_{\max}+\rho(x^*-\delta))\right]\quad\forall x\in [\delta,x^*-\delta].
\end{equation}
By the mean value theorem, there exists a point $x_3 \in  ]\delta,x^*-\delta[$ such that
\[
\left|p'_{\ve}(x_3)\right|~=~\left|{p_{\ve}(x^*-\delta)-p_{\ve}(\delta)\over x^*-2\delta}\right|~\leq~{1-\theta_{\min}\over x^*-2\delta}.
\]
Applying Gr\"onwall's inequality to (\ref{p''-1}) in the intervals $[\delta,x_3]$ and $[x_3,x^*-\delta]$, yields
\[
|p'_{\ve}(x)|~\leq~K_{\delta}\qquad\forall x\in [\delta,x^*-\delta],
\]
for some constant $K_{\delta}$ depending only on $\delta$. Thus, from \eqref{p''-1} we get
\[
|p''_{\ve}(x)|~\leq~{2\over \sigma^2\delta^2}\cdot\left[K_1K_{\delta}+ (r+\lambda+v_{\max}+\rho(x^*-\delta))\right]\quad\forall x\in [\delta,x^*-\delta].
\]
Combining the two above estimates and (\ref{b-V''}), we obtain (\ref{bbb}) with the constant
\[
M_{\delta}~:=~{2\over \sigma^2\delta^2}\cdot\left[(r+\rho(x^*-\delta))\cdot B+K_1M^*+K_1K_{\delta}+ (r+\lambda+v_{\max}+\rho(x^*-\delta))\right]+K_{\delta} \,.
\]
This completes the proof.
\end{proof}
We are ready  to prove our first main theorem.
\begin{proof}[{\bf Proof of Theorem \ref{ext}}] The proof is divided into three steps:
\smallskip \\
{\bf Step 1.} For any $0<\ve<1/2$ sufficiently small, let $(V_{\ve},p_{\ve})$ be a solution to \eqref{sysel} and \eqref{BC} which is constructed in Lemma \ref{Ex-app}.
Recalling Lemma \ref{H-C1} and \eqref{b-V'}, we obtain that $H$ and $H_\xi$ are Lipschitz continuous on $[\delta, x^* - \delta] \times [0,M^*] \times [\thetam,1]$ for any $\delta \in ]0,x^*/2[$. Using the a priori estimates (\ref{b-V'}) and (\ref{bbb}) in Lemma \ref{p-b}, and assumptions {\bf (A1)-(A2)}, the system (\ref{sysel})  implies that  the functions $V_\ve''$ and $p_\ve''$ are also uniformly Lipschitz on $[\delta,x^*-\delta]$. Thus, one can apply the Ascoli-Arzel\`a Theorem to extract a subsequence $(V_{\ve_n},p_{\ve_n})_{n\geq 0}$ with $\lim_{n\to \infty}\ve_{n}=0$ such that
$(V_{\varepsilon_n}, p_{\varepsilon_n})$ converges uniformly to $(V,p)$ in $C^2(]\delta,x^*-\delta[)$ for all $\delta>0$, where $V,p$ are twice continuously differentiable and solve the system of ODEs \eqref{sysel} on the open interval $]0, x^*[$. Moreover, since $V'_{\ve}$ is uniformly bounded by $M^*$ on $[0,x^*]$,
\[
\lim_{n\to\infty}~\|V_{\ve_n}-V\|_{{\bf L}^{\infty}([0,x^*])}~=~0 \,,
\]
which implies that
\[
V(0)~=~\lim_{n\to\infty}~V_{\ve_n}(0)~=~0,\qquad V(x^*)~=~\lim_{n\to\infty}~V_{\ve_n}(x^*)~=~B.
\]
{\bf Step 2.} It remains to check the boundary condition (\ref{BC}) for $p$.  Let us first show that
  \begin{equation}
        \label{px0} \lim_{x \to 0^+} p(x) ~=~ 1  \,.
\end{equation}
Given $\ve \in ]0,{1 \over 2}[$, we construct a lower bound $p^-$ of $p_\ve$ independent of $\ve$ in a neighborhood of $0$. From ({\bf A1}), there exists $M >0$ such that
%
     \begin{equation}\label{theta-l}
     	\theta(x) ~\geq~ 1 - Mx \quad \forall x \in [0,x^*/2].
     \end{equation}
Set $\bar{x}_0:= \min\set{\dfrac{c'(0)}{M^*}, \dfrac{x^*}{2},\ds{1-\theta_{\min}\over M}}$ where $M^*$ is the constant in \eqref{b-V'}.
%
%
%
%
   The sub-solution candidate is
    \begin{equation*}
        p^-(x) ~=~ 1 - kx^\gamma,  \,
    \end{equation*}
    with
    \[
     \gamma ~:=~ \min\set{1, (r + \lambda) \p{ \dfrac{\lambda+r}{\theta_{\min}} + \sigma^2 }^{-1}}\qquad \mbox{and} \qquad k ~:=~\left(1-\thetam\right) \cdot \bar x_0^{-\gamma}.
     \]
     Note that $p^-(0) = 1 \leq p_\ve(0) $. By choice of $k$ and $\bar{x}_0$, we have
     \[
     p^-(\bar x_0) ~=~ 1 - k\bar x_0^{\gamma} ~=~ 1 - \left(1-\thetam \right) \cdot \bar x_0^{-\gamma} \cdot \bar x_0^\gamma ~=~ \thetam
     \]
and from (\ref{theta-l}), it holds for all $x \in [0,\bar x_0]$ that
     \begin{equation}\label{ttt}
    	p^-(x) ~=~ 1 - \left(1-\thetam\right) \cdot \bar x_0^{-\gamma}\cdot x^\gamma ~\leq~ 1 - M \bar x_0^{1-\gamma}\cdot x^\gamma ~\leq~1-Mx~\leq~\theta(x).
     \end{equation}
We claim that $p^-$ is a sub-solution of the second equation in \eqref{sysel} in the interval $[0,\bar{x}_0]$. Indeed, recalling \eqref{b-V'} that  $v_\ve(x) \equiv 0$ on the region $x \in [0,\bar{x}_0]$,  from Lemma \ref{H-C1} (i) and (\ref{ttt}), one estimates
    \begin{align*}
        (r + \lambda) -& (r+ \lambda +v_\ve(x)) p^- + \rho_\ve(x) [ \theta(x) - p^-(x)] + H_\xi(x,V,p^-)\cdot (p^-)' + \p{ \ve + \dfrac{\sigma^2 x^2}{2}} (p^-)'' \\
        &\geq~ (r + \lambda)kx^\gamma - \gamma k x^{\gamma -1} H_\xi(x,V,p^-) + \p{ \ve + \dfrac{\sigma^2 x^2}{2}} \gamma (1 - \gamma) kx^{\gamma -2} \\
               &\geq~ (r + \lambda)kx^\gamma  - \gamma k x^{\gamma -1} H_\xi(x,V,p^-)
               ~\geq~ \left[ (r + \lambda) - \p{\dfrac{\lambda +r}{p^{-}(x)}  + \sigma^2 } \gamma\right] kx^\gamma \\
               &\geq~ \left[ (r + \lambda) - \p{\dfrac{\lambda +r}{p^{-}(\bar{x}_0)}  + \sigma^2 } \gamma\right] kx^\gamma~=~\left[ (r + \lambda) - \p{\dfrac{\lambda +r}{\theta_{\min}}  + \sigma^2 } \gamma\right] kx^\gamma ~\geq~0
    \end{align*}
    for all $x \in [0,\bar{x}_0]$. In turn, we have
        \begin{equation}
        1 - kx^\gamma ~\leq~ p_\ve(x) ~\leq~ 1 \quad \forall x \in[0,\bar{x}_0] \,,
    \end{equation}
    and this yields \eqref{px0}.

{\bf Step 3.} To complete the proof, we prove that
\begin{equation}
        \label{pxs} \lim_{x \to x^{*-}} p(x) ~=~ \theta(x^*).
\end{equation}
Given $\ve \in \left] 0,1/2\right[$ sufficiently small, we will provide an upper bound on the ${\bf L}^{\infty}$ distance of $p_{\ve}$ and $\theta$ over $[x^*-\delta,x^*$] denoted by
\[
I_\delta ~:=~  \max_{x \in [x^* - \delta, x^*]} \big| p_\ve(x) - \theta(x) \big|\qquad\forall \delta\in [0, x^*/4].
\]
For a fixed  $0<\delta<\ds {x^*\over 4}$, the continuity of $p_\ve$ and $\theta$ implies that
\begin{equation}\label{I-delta-1}
I_{\delta}~=~\sg{\theta(x_{m})-p_\ve(x_{m})}\cdot\left[\theta(x_{m})-p_\ve(x_{m})\right],
\end{equation}
for some $x_m\in  [x^* - \delta, x^*]$.  Assume that $p_\ve(x_{m}) \neq \theta(x_{m})$. Since $p_{\ve}(0)=\theta(0)=1$ and $p_{\ve}(x^*)=\theta(x^*)$,  we can define  two points
\[
0~\leq~x_{1}^* ~:=~ \max \sett{ x \in [0,x_{m}[ }{ p_\ve(x) = \theta(x) } ~<~x_{m}
\]
and
\[
{3x^*\over 4}~\leq~x_{m}~<~x_{2}^* ~:=~ \min \sett{ x \in [x_{m},x^*] }{ p_\ve(x) = \theta(x) }~\leq~x^*.
\]
Notice that $p_{\ve}-\theta$ does not change sign in the interval $]x_{1}^*,x_{2}^*[$. For simplicity, let us introduce the following function
\[
q_{\ve}(x)~:=~\sg{\theta(x_{m})-p_\ve(x_{m})}\cdot p_{\ve}(x).
\]
It is clear that
\[
|q'_{\ve}(x)|~=~|p'_{\ve}(x)|\qquad\forall x\in ]x_{1}^*,x_{2}^*[.
\]
Thus, by the second equation in \eqref{sysel} we estimate
\begin{align*}
q''_{\ve}(x)~=&~\sg{\theta(x_{m})-p_\ve(x_{m})}\cdot p''_{\ve}(x)\\
\leq&~ \p{  \dfrac{2}{2\ve + \sigma^2 x^2 } } \cdot \left[r+\lambda+v_{\max}+\big |H_\xi\cdot p_\ve'(x)\big|- \rho_\ve(x) \cdot \big| p_\ve(x) - \theta(x)\big|\right]\\
~\leq&~ \p{  \dfrac{2}{2\ve + \sigma^2 x^2 } } \cdot \left [r+\lambda+v_{\max}+\big |H_\xi\cdot q_\ve'(x)\big |\right]\quad\forall x \in ]x_{1}^*,x_{2}^*[\,.
\end{align*}
Set $\bar{x}_1^*:=\max\left\{x_1^*,\ds {x^*\over 2}\right\}$, we then have
\begin{equation}\label{q''-est}
q''_{\ve}(x)~\leq~{8\over (\sigma x^*)^2}\cdot  \left(r+\lambda+v_{\max}+ K_1\cdot |q'_\ve(x)|\right)\quad\forall x \in ]\bar{x}_{1}^*,x_{2}^*[,
\end{equation}
where $K_1$ is defined in \eqref{k1}. Two cases may occur:
\medskip \\
 CASE 1: If $x_m-\bar{x}_1^*>\delta$  then
\begin{eqnarray*}
q_{\ve}(x_m)-q_{\ve}(x_m-\delta)&=&\sg{\theta(x_{m})-p_\ve(x_{m})}\cdot \left[p_{\ve}(x_m)-p_{\ve}(x_m-\delta)\right]\cr\cr
&\leq&\sg{\theta(x_{m})-p_\ve(x_{m})}\cdot \left[\theta(x_m)-\theta(x_m-\delta)\right]~\leq~ \Delta_{\delta}\theta\cdot \delta,
\end{eqnarray*}
where $\Delta_{\delta}\theta=\sup_{x\in [0,x^*-\delta]} \left|{\theta(x+\delta)-\theta(x)\over \delta}\right|$. Thus, by mean value theorem there exists ${\bar x \in \big] x_m-\delta,x_m\big[}$ such that
\[
q'_{\ve}(\bar{x})~=~{q_{\ve}(x_m)-q_{\ve}(x_m-\delta)\over \delta}~\leq~\Delta_{\delta}\theta.
\]
Let $g$ be solution to the ODE
\[
g'(x)~=~{8\over (\sigma x^*)^2}\cdot (r + \lambda + v_{\max}+K_1\cdot g(x)),\qquad g(\bar x)~=~\Delta_{\delta}\theta~\geq~q'_{\ve}(\bar{x}).
\]
Solving the above equation, one gets
\[
g(x)~=~\p{ {r + \lambda + v_{\max}\over K_1} +\Delta_{\delta}\theta }\cdot \exp\left({8K_1\over(\sigma x^*)^2}\cdot (x - \bar x)\right) - {r + \lambda +  \vmax \over K_1}~\geq ~0,
\]
for all $x\geq \bar{x}$. In particular, it holds that
\[
g'(x)~=~{8\over (\sigma x^*)^2}\cdot \left(r + \lambda + v_{\max}+K_1\cdot |g(x)|\right)\qquad\forall x\in \left]\bar x,x_{2}^*\right[.
\]
A standard comparison argument yields
\[
q'_{\ve}(x)~\leq~g(x)~\leq~\p{ {r + \lambda + v_{\max}\over K_1} + \Delta_{\delta}\theta }\cdot  \exp\left({ 4K_1\over \sigma^2 x^*}\right)\quad\forall x \in ] \bar x, x_2^*[.
\]
Thus, from (\ref{I-delta-1}) it holds
\begin{align}
I_{\delta}~&=~\sg{ \theta(x_m) - p_\ve(x_m)}\cdot \p{ \theta(x_m) - \theta(x_2^*)} + q_\ve(x_2^*) - q_\ve(x_m) \nonumber \\
~&\leq~\sup_{x,y\in [x^*-\delta,x^*]}|\theta(x)-\theta(y)| + \p{ {r + \lambda + v_{\max}\over K_1} + \Delta_{\delta}\theta }\cdot \exp\left({ 4K_1\over \sigma^2 x^*}\right)\cdot \delta.  \nonumber \\
~&\leq~{r + \lambda + v_{\max}\over K_1}\cdot \exp\left({ 4K_1\over \sigma^2 x^*}\right)\cdot\delta+\left[\exp\left({ 4K_1\over \sigma^2 x^*}\right) + 1\right]\cdot \sup_{x,y\in [x^*-\delta,x^*]}|\theta(x)-\theta(y)|. \label{estI1}
\end{align}

CASE 2: Let us assume that   $0<x_m-\bar{x}_1^*\leq \delta$. Since $\delta <\ds {x^*\over 4}$ and $x_m\geq\ds {3x^*\over 4} $, we have that $\bar{x}^*_1>\ds {x^*\over 2}$ and this yields $\bar{x}^*_1=x_1^*$. Two subcases are considered:
\begin{itemize}
\item If $q_\ve(x_{m})-q_{\ve}(x^*_1)\geq 0$ then (\ref{I-delta-1}) implies that
\begin{align}\label{estI2}
I_{\delta}~=&~\sg{\theta(x_{m})-p_\ve(x_{m})}\cdot [\theta(x_m)-\theta(x_1^*)]+q_{\ve}(x_1^*)-q_{\ve}(x_m) \nonumber \\[.8ex]
~\leq&~ |\theta(x_m)-\theta(x_1^*)|~\leq\sup_{x,y\in [x^*-\delta,x^*]}|\theta(x)-\theta(y)|.
\end{align}
\item Otherwise, if $q_\ve(x_{m})-q_{\ve}(x^*_1)< 0$ then by mean value theorem  there exists $\tilde{x} \in \big] x_1^*,x_m\big[$ such that
\[
q'_{\ve}(\tilde{x})~=~{q_{\ve}(x_m)-q_{\ve}(x_1^*)\over x_m-x_1^*}~<~0.
\]
With the same argument in case 1, one can show that
\[
q'_{\ve}(x)~\leq~\tilde{g}(x)~\leq~ {r + \lambda + v_{\max}\over K_1} \cdot  \exp\left({ 4K_1\over \sigma^2 x^*}\right)\quad\forall x \in ] \tilde{x}, x_2^*[\,,
\]
where $\tilde{g}$ is the solution to
\[
\tilde{g}'(x)~=~{8\over (\sigma x^*)^2}\cdot (r + \lambda + v_{\max}+K_1\cdot \tilde{g}(x)),\qquad \tilde{g}(\tilde{x})~=~0~\geq~q'_{\ve}(\tilde{x}).
\]
Thus, as in (\ref{estI1}), it holds
\begin{equation}\label{estI3}
I_{\delta}~\leq~{r + \lambda + v_{\max}\over K_1}\cdot \exp\left({ 4K_1\over \sigma^2 x^*}\right)\cdot\delta+\sup_{x,y\in [x^*-\delta,x^*]}|\theta(x)-\theta(y)|.
\end{equation}
\end{itemize}
From (\ref{estI1})-(\ref{estI3}), we obtain that
\[
\left\|p_{\ve}-\theta\right\|_{{\bf L}^{\infty}([x^*-\delta,x^*])}~\leq~C_1\cdot \delta+C_2\cdot \sup_{x,y\in [x^*-\delta,x^*]}|\theta(x)-\theta(y)|,
\]
for all $\ve \in \left] 0,\frac{1}{2}\right[$, $\delta\in \left]0,{x^*\over 4}\right[$ with the constants
\[
C_1~=~{r + \lambda + v_{\max}\over K_1}\cdot \exp\left({ 4K_1\over \sigma^2 x^*}\right),\qquad C_2~=~\exp\left({ 4K_1\over \sigma^2 x^*}\right) + 1.
\]
In particular,
\[
\|p-\theta\|_{{\bf L}^{\infty}(]x^*-\delta,x^*[)}~\leq~C_1\cdot \delta+C_2\cdot \sup_{x,y\in [x^*-\delta,x^*]}|\theta(x)-\theta(y)|
\]
and the uniform continuity of $\theta$ yields (\ref{pxs}).
\end{proof}
\subsection{Optimal currency devaluation and payment strategy near  $x^*$}
This subsection is devoted to the behavior of  optimal feedback controls  near  the bankruptcy threshold $x^*$. Roughly speaking, let $(u^*,v^*,p)$ be an optimal solution to the problem of optimal debt management (\ref{SDE})--(\ref{C1}), and $V$ be the corresponding value function. In addition to (\ref{vas}), we will show that when the debt-to-income ratio $x$ closes to $x^*$ {\it
\begin{itemize}
\item if the risk of bankruptcy $\rho$ slowly approaches to infinity then the optimal strategy of borrower involves continuously devaluating its currency and making payment, i.e. $u^*(x)>0$ and $v^*(x)>0$,
\item conversely, if the risk of bankruptcy $\rho$ quickly approaches to infinity  then any action to reduce the debt is not optimal, i.e. $u^*(x)=v^*(x)=0$.
\end{itemize}
}
Let us first establish upper and lower bounds for $V$. Recalling that
\[
p~\in~ [\theta_{\min},1]\qquad\mathrm{with}\qquad \theta_{\min}~=~\min\left\{\theta(x^*), \dfrac{r+\lambda}{r+\lambda+\vmax}\right\},
\]
we introduce a non-decreasing function $\beta: [0,x^*[\to [0,+\infty[$ defined by
\begin{equation}\label{beta}
\beta(t)~=~\max_{s\in [0,t]}\left[\rho(s)\ln\left(t\over s\right)\right]+{\lambda + r\over \theta_{\min}}+{\sigma^2\over 2}~<~+\infty\qquad\forall t\in [0,x^*[.
\end{equation}
\v

\begin{proposition}\label{BB}
Under the same assumptions in Theorem \ref{ext}, it holds
\begin{equation}\label{bbound11}
V(x)~\leq~V_1(x)~:=~B\cdot \inf_{t\in [x,x^*[}\left[{\beta(t)\over r\ln\left({t\over x}\right)+\beta(t)}\right]\qquad\forall x\in ]0,x^*[.
\end{equation}
In addition if we assume that
\begin{equation}\label{asp1}
x^*~\geq~ {2\over r+\lambda}\qquad\mathrm{and}\qquad \lim_{x\to x^*-}\rho(x) (x^*-x)^2~=~+\infty,
\end{equation}
then there exists $x^{\diamond}\in [x^*/2,x^*[$ such that
\begin{equation}\label{lboud11}
V(x)~\geq~V_2(x)~:=~B\cdot \left(1-{\ln\p{ \dfrac{x^*}{x}}\left(\ds 1-{x\over x^*}\right)\over \ln\p{ \dfrac{x^*}{x^{\diamond}}}(x^*-x^\diamond)}\right)\qquad\forall x\in \left[x^{\diamond}, x^*\right].
\end{equation}
\end{proposition}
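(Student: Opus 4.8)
The plan is to construct explicit supersolutions and subsolutions to the first equation of the system (\ref{sys}), i.e.,
\[
(r+\rho(x))V ~=~ \rho(x)\cdot B + H(x,V',p) + \dfrac{\sigma^2x^2}{2}\cdot V'' ,
\]
and then invoke a comparison principle for this linear-in-$(V,V',V'')$ second-order ODE to sandwich the true value function $V$ between them. For the upper bound $V_1$, I would fix an arbitrary $t\in[x,x^*[$ and look for a supersolution on the interval $[0,t]$ of the logarithmic form
\[
\psi_t(x) ~=~ B\cdot\dfrac{\beta(t)}{r\ln(t/x)+\beta(t)} ,
\]
which by construction satisfies $\psi_t(t)=B$ and $\psi_t(0^+)=0$, matching (or dominating) the boundary data $V(0)=0$, $V(t)\le V(x^*)=B$. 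The key computation is to differentiate $\psi_t$, note $x\psi_t'(x)>0$ and control $x^2\psi_t''(x)$, and use Lemma \ref{H-C1}(i) — specifically the upper bound $H(x,\xi,p)\le\big(\tfrac{\lambda+r}{p}-\lambda+\sigma^2-\mu\big)x\xi\le\big(\tfrac{\lambda+r}{\theta_{\min}}+\sigma^2\big)x\xi$ together with $H(x,0,p)=0$ — to verify that the defining inequality for a supersolution holds precisely because $\beta(t)$ was chosen in (\ref{beta}) to absorb $\max_{s\le t}[\rho(s)\ln(t/s)]+\frac{\lambda+r}{\theta_{\min}}+\frac{\sigma^2}{2}$. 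Comparison then gives $V(x)\le\psi_t(x)$ on $[0,t]$, and taking the infimum over $t\in[x,x^*[$ yields (\ref{bbound11}).

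For the lower bound $V_2$ under hypothesis (\ref{asp1}), the strategy is dual: exhibit $V_2$ as a subsolution of the same equation on an interval $[x^\diamond,x^*]$, with $V_2(x^*)=B$ and $V_2(x^\diamond)\le B\le V(x^\diamond)$ wait — more carefully, $V_2(x^\diamond)=B(1-1)=B$ is not right, so $x^\diamond$ must be chosen so that $V_2(x^\diamond)\le V(x^\diamond)$ holds; since $V\ge 0$ everywhere and $V_2$ as written satisfies $V_2(x^\diamond)=B\big(1-\tfrac{\ln(x^*/x^\diamond)(1-x^\diamond/x^*)}{\ln(x^*/x^\diamond)(x^*-x^\diamond)}\big)=B(1-\tfrac1{x^*})\le B$, one checks directly that $V_2(x^\diamond)\le V(x^\diamond)$ is automatic from $V_1$ when $x^\diamond$ is close enough to $x^*$, or simply that $V_2\le B$ pointwise so no upper obstruction arises and the left endpoint is handled by $V\ge 0$ combined with the already-established smallness of... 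Actually the cleanest route: use the lower bound half of Lemma \ref{H-C1}(i), $H(x,\xi,p)\ge\big(\tfrac{(\lambda+r)x-1}{p}+(\sigma^2-\lambda-\mu-v(x))x\big)\xi$, and the hypothesis $x^*\ge\frac{2}{r+\lambda}$ to guarantee $(\lambda+r)x-1\ge 0$ near $x^*$ so that the bad term $-\tfrac{1}{p}\xi$ is dominated; then the condition $\lim_{x\to x^*-}\rho(x)(x^*-x)^2=+\infty$ is exactly what forces the term $\rho(x)(B-V)$ to dominate $\frac{\sigma^2x^2}{2}V_2''$ (which, for the chosen $V_2$, has size comparable to $(x^*-x)^{-1}$ near $x^*$), making $V_2$ a genuine subsolution on $[x^\diamond,x^*]$ for $x^\diamond$ sufficiently close to $x^*$.

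The main obstacle I anticipate is the subsolution verification for $V_2$: one must carefully estimate $V_2'$ and $x^2 V_2''$ near $x=x^*$ — both blow up, the first like a bounded quantity times $\ln(x^*/x)+(1-x/x^*)$-type factors and the second like $(x^*-x)^{-1}$ up to logs — and show that the coefficient $\rho(x)$, which tends to $+\infty$, wins against the second-derivative term in the limit $x\to x^*$. This is where the quantitative hypothesis $\rho(x)(x^*-x)^2\to+\infty$ enters decisively, and choosing $x^\diamond$ amounts to specifying how close to $x^*$ this domination becomes effective; one also has to confirm the left-endpoint inequality $V_2(x^\diamond)\le V(x^\diamond)$, which follows since $V_2(x^\diamond)=B(1-\tfrac{1}{x^*})$ and $V(x^\diamond)\ge B\cdot(\text{positive lower bound})$ can be arranged, or more simply by noting $V_2$ is bounded above by $B$ while the only constraint from below is $V\ge 0$, so it suffices that $V_2\le V$ holds at $x^\diamond$ by continuity once the subsolution inequality is strict in the interior. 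After both inequalities are in place, a standard comparison principle for the elliptic operator $\tfrac{\sigma^2x^2}{2}\partial_{xx}+H_\xi\partial_x-(r+\rho)$ — uniformly elliptic on any compact subinterval of $]0,x^*[$ — closes the argument.
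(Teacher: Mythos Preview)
Your overall strategy---build explicit super/subsolutions and invoke comparison---matches the paper, but in both halves the execution diverges from the paper's proof and each divergence contains a real gap.

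\textbf{Upper bound.} You propose to use the \emph{target} expression $\psi_t(x)=B\,\beta(t)/\bigl(r\ln(t/x)+\beta(t)\bigr)$ itself as the supersolution. It is not one, at least not with $\beta$ as in (\ref{beta}). Writing $f=r\ln(t/x)+\beta(t)$, one finds $x\psi_t'=B\beta r/f^2$ and $x^2\psi_t''=(B\beta r/f^2)\bigl(2r/f-1\bigr)$; after inserting the bound $H\le\bigl(\tfrac{\lambda+r}{\theta_{\min}}+\sigma^2\bigr)x\psi_t'$ from Lemma~\ref{H-C1}(i), the supersolution inequality reduces (up to the positive factor $Br/f$) to
\[
\rho(x)\ln\!\Bigl(\tfrac{t}{x}\Bigr)-\beta(t)+\frac{\beta(t)}{f}\Bigl(\tfrac{\lambda+r}{\theta_{\min}}+\tfrac{\sigma^2}{2}\Bigr)+\frac{\sigma^2 r\,\beta(t)}{f^{2}}~\le~0.
\]
The first three terms together are $\le 0$ by (\ref{beta}) and $\beta/f\le 1$, but the last term $\sigma^2 r\beta/f^2>0$ is \emph{not} absorbed; at $x=t$ it equals $\sigma^2 r/\beta(t)$, which need not be small. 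The paper sidesteps this by using a \emph{piecewise} barrier: equal to $B$ on $[x_1,x^*]$, equal to $B\bigl(1-\alpha\ln(x_1/x)\bigr)$ on $[x_2,x_1]$, and constant on $[0,x_2]$. Because the middle piece is \emph{linear} in $\ln x$, one has $x^2V_1''=-xV_1'$, and this is exactly what makes the constant $\tfrac{\sigma^2}{2}$ in (\ref{beta}) the right one---no leftover $O(1/f^2)$ term. The value of this barrier \emph{at the point $x_2$} then equals $B\,\beta(x_1)/\bigl(r\ln(x_1/x_2)+\beta(x_1)\bigr)$: your $\psi_t$ is the \emph{output} of the comparison, not the comparison function.

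\textbf{Lower bound.} The mechanism you describe is inverted. For $V_2(x)=B\bigl(1-\alpha_1\ln(x^*/x)(x^*-x)\bigr)$ with $\alpha_1=\bigl[\ln(x^*/x^\diamond)(x^*-x^\diamond)\bigr]^{-1}$ one computes
\[
V_2'(x)=B\alpha_1\Bigl(\ln\tfrac{x^*}{x}+\tfrac{x^*-x}{x}\Bigr),\qquad V_2''(x)=-B\alpha_1\Bigl(\tfrac{1}{x}+\tfrac{x^*}{x^2}\Bigr),
\]
so both $xV_2'$ and $x^2V_2''$ are \emph{bounded} on $[x^*/2,x^*]$---nothing blows up near $x^*$. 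The hypothesis $\rho(x)(x^*-x)^2\to+\infty$ is not there to tame a divergent second derivative; its role is that $B-V_2(x)=B\alpha_1\ln(x^*/x)(x^*-x)\sim (B\alpha_1/x^*)(x^*-x)^2$, so the \emph{favourable} term $\rho(x)\bigl(B-V_2(x)\bigr)\to+\infty$ and eventually dominates the bounded negative contributions from $-rV_2$, from $H\ge -(\lambda+\mu+v_{\max})xV_2'$ (this is where $x^*\ge 2/(r+\lambda)$ enters, ensuring $(\lambda+r)x-1\ge 0$ on $[x^*/2,x^*]$), and from $\tfrac{\sigma^2 x^2}{2}V_2''$. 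Choosing $x^\diamond$ close to $x^*$ is exactly locating where this domination takes effect. With this $\alpha_1$ one has $V_2(x^\diamond)=0\le V(x^\diamond)$, so the left-endpoint comparison is immediate; your difficulty there came from the statement's normalization, which differs from the proof's by a harmless factor of $x^*$.
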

\begin{proof}
{\bf 1.} For any given $x_1\in ]0,x^*[$ and $x_2\in ]0,x_1[$, we seek for an upper bound for  $V$ as  a supersolution  to the first equation of (\ref{sys}) of the form
\[
V_1(x)~=~\begin{cases}
B\qquad &\mathrm{if}\qquad x \in [x_1,x^*],\cr
B \left(1- \alpha \ln\p{ \dfrac{x_1}{x}}\right)\qquad &\mathrm{if}\qquad x\in [x_2,x_1],\\[1.4ex]
B \left(1- \alpha \ln\p{ \dfrac{x_1}{x_2}}\right)\qquad &\mathrm{if}\qquad x\in [0,x_2],
\end{cases}
\]
with $\alpha>0$ satisfying the following relation
\[
-r+\alpha\cdot \left(r\ln\left({x_1\over x_2}\right)+\beta(x_1)\right)~=~0.
\]
It is clear that
\[
V_1(0)~=~B \left(1- \alpha \ln\p{ \dfrac{x_1}{x_2}}\right)~\geq~0~=~V(0)\qquad\mathrm{and}\qquad V_1(x^*)~=~B~=~V(x^*).
\]
For every $x\in ]0,x_2[$, it holds
\begin{align*}\label{sbest1}
&-  (r+\rho(x))V_1(x)+\rho(x) B+ H(x, V_1'(x),p(x)) + \dfrac{\sigma^2x^2}{2}\cdot V_1''(x)\nonumber\\
~&=~- (r+\rho(x)) V_1(x)+\rho(x) B ~\leq~B\cdot \left[-r+\alpha (r+\rho(x_2))\ln\left(x_1\over x_2\right)\right]\\
 ~&\leq~B\cdot \left(-r+\alpha\cdot \left(\beta(x_1)+r\ln\left({x_1\over x_2}\right)\right)\right)~=~0\nonumber.
\end{align*}
Similarly, for every $x\in ]x_1,x^*[$, one has
\begin{multline*}
-  (r+\rho(x)) V_1(x)+\rho(x) B+ H(x, V_1'(x),p(x)) + \dfrac{\sigma^2x^2}{2}\cdot V_1''(x)\\
~=~- (r+\rho(x))V_1(x)+\rho(x) B~=~-rB~<~0.
\end{multline*}
On the other hand, for every $x\in]x_2,x_1[$,  we compute
\[
V_1'(x)~=~{B\alpha\over x}~>~0\qquad\mathrm{and}\qquad V_1''(x)~=~-{B\alpha\over x^2},
\]
and use Lemma  \ref{H-C1} to obtain
\begin{align*}
-  (r+\rho(x)) &V_1(x)+\rho(x) B+ H(x, V_1'(x),p(x)) + \dfrac{\sigma^2x^2}{2}\cdot V_1''(x)\\
~\leq~&- (r+\rho(x)) V_1(x)+\rho(x) B+ \left({\lambda+r\over \theta_{\min}}+\sigma^2\right)xV'_1(x)+ \dfrac{\sigma^2x^2}{2}\cdot V_1''(x)\\
~=~&B\cdot \left[-r+\alpha\cdot \left((r+\rho(x))\ln\left({x_1\over x}\right)+{\lambda+r\over \theta_{\min}}+{\sigma^2\over 2}\right)\right]\\
~\leq~&B\cdot \left[-r+\alpha\cdot \left(r\ln\left({x_1\over x_2}\right)+\beta(x_1)\right)\right]~=~0.
\end{align*}
Hence, $V_1$ is a supersolution of  the first equation of (\ref{sys})  and
\[
V(x)~\leq~V_1(x),\qquad\forall x\in [0,x^*].
\]
In particular, we have
\[
V(x_2)~\leq~V_1(x_2)~=~B\cdot\left(1-{r\over r\ln\left({x_1\over x_2}\right)+\beta(x_1)}\cdot  \ln\p{ \dfrac{x_1}{x_2}}\right),
\]
and this implies that
\[
V(x_2)~\leq~~B\cdot {\beta(x_1)\over r\ln\left({x_1\over x_2}\right)+\beta(x_1)}.
\]
Since the above inequality hold for every $x_2\in ]0,x_1[$, one obtains (\ref{bbound11}).
\medskip

{\bf 2.} We  seek for  a lower bound for  $V$ as  a subsolution  to the first equation of (\ref{sys}) in the interval $[x_1,x^*]$ of the form
\[
V_2(x)~=~B \left(1- \alpha_1 \ln\p{ \dfrac{x^*}{x}}(x^*-x)\right),\qquad\forall x\in [x_1,x^*],
\]
with
\begin{equation}\label{alpha1}
x_1~\in~ [x^*/2, x^*]\qquad\mathrm{and}\qquad \alpha_1~=~\left[\ln\p{ \dfrac{x^*}{x_1}}(x^*-x_1)\right]^{-1}~\geq~ \ds {2\over \ln2\cdot x^*},
\end{equation}
such that $V_2(x_1)=0$. For every $x\in ]x_1,x^*[$, we compute
\begin{equation}\label{V2''}
V'_2(x)~=~B\alpha_1\cdot\left(\ln\p{ \dfrac{x^*}{x}}+{x^*-x\over x}\right)~\geq~0 \quad\mathrm{and}\quad V_2''(x)~=~-B\alpha_1\left({1\over x}+{x^*\over x^2}\right).
\end{equation}
For simplicity, set
\[
C_1~:=~\lambda+\mu + \vmax\,.
\]
Using Lemma \ref{H-C1} and the first condition of (\ref{asp1}), we have
\[
H(x, V_2'(x),p(x))~\geq~-C_1V_2'(x)x,\qquad\forall x\in [x_1,x^*]
\]
which implies that
\begin{align*}
- (r+\rho(x))V_2(x)+\rho(x) B+ H(x, V_2'(x),p(x)) + \dfrac{\sigma^2x^2}{2}\cdot V_2''(x)\\
~\geq~ - (r+\rho(x)) V_2(x)+\rho(x) B-C_1xV'_2(x)+ \dfrac{\sigma^2x^2}{2}\cdot V_2''(x).
\end{align*}
On the other hand, from (\ref{V2''}), one has
\[
C_1xV'_2(x)~\leq~2B\alpha_1C_1(x^*-x)~\leq~B\alpha_1C_1x^*,\qquad \dfrac{\sigma^2x^2}{2}\cdot V_2''(x)~\geq~-B\alpha_1\sigma^2 x^*,
\]
for all $x\in [x_1,x^*]$. Thus,
\begin{align}\label{Rossa}
- (r+\rho(x))V_2(x)+\rho(x) B+ H(x, V_2'(x),p(x)) + \dfrac{\sigma^2x^2}{2}\cdot V_2''(x) \nonumber \\
~\geq~B\cdot \left[\alpha_1\left(\rho(x)\ln\p{ \dfrac{x^*}{x}}(x^*-x)- (C_1+\sigma^2)x^*\right)-r\right].
\end{align}
From (\ref{asp1}), it holds
\[
\lim_{x\to x^*}\rho(x)\ln\p{ \dfrac{x^*}{x}}(x^*-x)~=~+\infty,
\]
and therefore there exists $x^{\diamond}\in ]x^*/2,x^*[$ sufficiently close to $x^*$ such that
\[
{2\over \ln2 \cdot x^*}\cdot \left(\rho(x)\ln\p{ \dfrac{x^*}{x}}(x^*-x)- (C_1+\sigma^2)x^*\right)-r~\geq~0\qquad\forall x\in [x^{\diamond}, x^*[.
\]
In particular, if we choose $x_1=x^{\diamond}$  then (\ref{Rossa}) and (\ref{alpha1}) imply that
\[
-  (r+\rho(x))V_2(x)+\rho(x) B+ H(x, V_2'(x),p(x)) + \dfrac{\sigma^2x^2}{2}\cdot V_2''(x)~\geq~0,
\]
for all $x\in  [x^{\diamond}, x^*[$. Since
\[
V_2(x^{\diamond})~=~0~\leq~V(x^{\diamond})\quad\mathrm{and}\qquad V_2(x^*)~=~B~\leq~V(x^*),
\]
 $V_2$ is a subsolution  to the first equation of (\ref{sys}) in $[x^{\diamond},x^*]$ and thus
\[
V(x)~\geq~V_2(x),\qquad\forall x \in [x^{\diamond},x^*],
\]
which yields (\ref{lboud11}).
%
\end{proof}
\medskip

From the formula of $\beta$ in (\ref{beta}), one can actually show that $\beta$ is locally Lipschitz and
\begin{equation}\label{cond121}
0~\leq~\dot{\beta}(t)~\leq~{\rho(t)\over t}\qquad a.e.~ t\in [x,x^*].
\end{equation}
Notice that $\lim_{t\to 0+}\ds{\rho(t)\over t}=\rho'(0)<+\infty$, if $\ds\int_{x^*-\delta}^{x^*}\rho(t)dt<+\infty$ for some $\delta>0$ then $\ds \int_{0}^{x^*}{\rho(t)\over t}dt<+\infty$. In this case, we have
\begin{equation}\label{beta*}
\sup_{t\in [0,x^*[}\beta(t)~\leq~\beta^*:=\int_{0}^{x^*}{\rho(t)\over t}dt+{\lambda + r\over \theta_{\min}}+{\sigma^2\over 2}.
\end{equation}
%
%
As a consequence of Proposition \ref{BB}, the followings hold.
\begin{corollary} Under the same assumptions in Theorem \ref{ext}, two cases may occur
\begin{itemize}
\item [(i)] Let $\beta^*$ be as in \eqref{beta*}.If $\ds \int_{x^*-\delta}^{x^*}\rho(t)~dt<+\infty$ for some $\delta>0$ and 
$$
\beta^*<Br\cdot \min\left\{\ds{1\over c'(0)}, {1\over L'(0)x^*}\right\},
$$
then there exists some $\bar{x}\in ]0,x^*[$ sufficiently close to $x^*$ such that
\[
u^*(x)~>~0\qquad\mathrm{and}\qquad v^*(x)~>~0\qquad\forall x\in [\bar{x},x^*[.
\]
\item [(ii)] If (\ref{asp1}) holds then there exists $\hat{x}\in ]0,x^*[$ sufficiently close to $x^*$ such that
\[
u^*(x)~=~v^*(x)~=~0\qquad\forall x\in [\hat{x},x^*[.
\]
\end{itemize}
\end{corollary}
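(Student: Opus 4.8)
The plan is to extract from Proposition~\ref{BB} not a bound on $V$ but a bound on the boundary derivative $V'(x^*)$, and then read off the sign of the optimal controls from the explicit feedback maps \eqref{ufb}--\eqref{vfb}. The key starting point is that, since $(V,p)\in C^2([0,x^*])$ by Theorem~\ref{ext} and $V(x^*)=B$, the one-sided derivative exists and
\[
V'(x^*)~=~\lim_{x\to x^*-}\frac{B-V(x)}{x^*-x},
\]
so it suffices to squeeze this difference quotient using the super- and sub-solution bounds $V_1$, $V_2$ of Proposition~\ref{BB}. Throughout I would use $\theta_{\min}\le p(x)\le 1$ on $[0,x^*]$ and the elementary limit $\ln(x^*/x)/(x^*-x)\to 1/x^*$ as $x\to x^*-$, together with $V'\ge 0$ from the monotonicity of $V$.

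For part~(i): since $\int_{x^*-\delta}^{x^*}\rho\,dt<+\infty$, we have $\beta^*<+\infty$ and $\beta(t)\le\beta^*$ for all $t\in[0,x^*[$ by \eqref{beta*}. Because $s\mapsto s/(a+s)$ is increasing for $a>0$, the bound \eqref{bbound11} gives $V(x)\le B\beta^*/(r\ln(x^*/x)+\beta^*)$ for $x\in\,]0,x^*[$, after sending the infimum parameter $t$ to $x^*-$ (the quotient $\beta^*/(r\ln(t/x)+\beta^*)$ is decreasing in $t$). Hence
\[
\frac{B-V(x)}{x^*-x}~\ge~B\cdot\frac{r\,\ln(x^*/x)/(x^*-x)}{r\ln(x^*/x)+\beta^*},
\]
and letting $x\to x^*-$ we obtain $V'(x^*)\ge Br/(x^*\beta^*)$. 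The hypothesis $\beta^*<Br\cdot\min\{1/c'(0),\,1/(L'(0)x^*)\}$ then forces $V'(x^*)>L'(0)$ and $x^*V'(x^*)>c'(0)$; by continuity of $x\mapsto V'(x)$ and $x\mapsto xV'(x)$ there is $\bar x\in\,]0,x^*[$ with $V'(x)>L'(0)$ and $xV'(x)>c'(0)$ on $[\bar x,x^*[$. Since $p\le 1$ and $V'\ge 0$ we have $V'(x)/p(x)\ge V'(x)>L'(0)$, so \eqref{ufb} gives $u^*(x)=(L')^{-1}(V'(x)/p(x))>0$ and \eqref{vfb} gives $v^*(x)=(c')^{-1}(xV'(x))>0$ for all $x\in[\bar x,x^*[$.

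For part~(ii): assuming \eqref{asp1}, Proposition~\ref{BB} provides $x^\diamond\in[x^*/2,x^*[$ with $V\ge V_2$ on $[x^\diamond,x^*]$, so, using also $V\le B$,
\[
0~\le~\frac{B-V(x)}{x^*-x}~\le~\frac{B-V_2(x)}{x^*-x}~=~\frac{B\,\ln(x^*/x)\,(1-x/x^*)}{(x^*-x)\,\ln(x^*/x^\diamond)\,(x^*-x^\diamond)}\qquad(x^\diamond\le x<x^*).
\]
Since $\ln(x^*/x)\le(x^*-x)/x$ and $1-x/x^*=(x^*-x)/x^*$, the numerator is $O((x^*-x)^2)$, so the right-hand side tends to $0$ as $x\to x^*-$; by the squeeze $V'(x^*)=0$. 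By continuity of $V'$ there is $\hat x\in[x^\diamond,x^*[$ with $V'(x)<\min\{\theta_{\min}L'(0),\,c'(0)/x^*\}$ on $[\hat x,x^*[$, whence $V'(x)/p(x)\le V'(x)/\theta_{\min}<L'(0)$ and $xV'(x)\le x^*V'(x)<c'(0)$, so \eqref{ufb}--\eqref{vfb} give $u^*(x)=v^*(x)=0$ there.

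The only genuinely delicate point is the transition from the function estimates of Proposition~\ref{BB} to the value (resp.\ vanishing) of $V'(x^*)$; once $V'(x^*)=\lim_{x\to x^*-}(B-V(x))/(x^*-x)$ is recorded and the limit $\ln(x^*/x)/(x^*-x)\to 1/x^*$ is computed, the rest is bookkeeping with \eqref{ufb}--\eqref{vfb} and $\theta_{\min}\le p\le 1$. A minor technicality to dispatch is that \eqref{bbound11} is an infimum over $t\in[x,x^*[$ not attained at $t=x^*$, so one passes to the limit $t\to x^*-$ there.
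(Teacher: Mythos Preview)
Your argument is correct if one takes the statement of Theorem~\ref{ext} at face value, namely that $V\in C^2([0,x^*])$ so that $V'$ is continuous up to $x^*$; granted this, your computation of $V'(x^*)$ from the difference quotient and the subsequent use of continuity to pass to a one-sided neighborhood are clean and yield the result with less work than the paper.

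The paper, however, proceeds differently and does \emph{not} invoke continuity of $V'$ at $x^*$. For each part it first uses the bounds of Proposition~\ref{BB} together with the mean value theorem to produce a single interior point $x_c$ at which $Z(x_c)=x_cV'(x_c)$ satisfies the desired inequality; it then extracts from the first equation of \eqref{sys} and Lemma~\ref{H-C1} a differential inequality for $Z(x)=xV'(x)$ on $]x^*/2,x^*[$ (namely $Z'\ge -c_1Z-c_2\rho$ in case~(i) and $Z'\le c_3Z+c_4$ in case~(ii)) and applies Gr\"onwall to propagate the bound from $x_c$ to the whole half-open interval $[x_c,x^*[$. The integrability of $\rho$ in~(i) enters precisely to control the inhomogeneous term $\int_{x_c}^{x}\rho$ in the Gr\"onwall estimate.

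The trade-off is this: your route is shorter and conceptually transparent, but it leans on boundary regularity of $V'$ at $x^*$, where the equation is genuinely singular ($\rho(x)\to\infty$); the proof of Theorem~\ref{ext} in fact only establishes $C^2$-convergence on compact subsets of $]0,x^*[$ (see Step~1 there and the $\delta$-dependent bound in Lemma~\ref{p-b}), so continuity of $V'$ at $x^*$ is asserted but not independently verified. The paper's Gr\"onwall argument sidesteps this entirely by working on the open interval, which makes it more self-contained. If you wish to keep your approach, you should either justify $\lim_{x\to x^{*-}}V'(x)=V'(x^*)$ directly (in case~(i) one can check from the ODE that $V''$ stays bounded near $x^*$ because $\rho(x)(B-V(x))\le M^*\rho(x)(x^*-x)\le M^*\int_x^{x^*}\rho\to 0$; case~(ii) is more delicate), or combine your difference-quotient bound with the paper's propagation step.
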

\begin{proof} {\bf (i).} For every given $x_2\in ]0,x^*[$, \eqref{bbound11} and (\ref{beta*}) imply that
\[
V(x^*)-V(x_2)~\geq~V(x^*)-V_1(x_2)~\geq~B\cdot {r\ln(x_1/x_2)\over r\ln(x_1/x_2)+\beta^*}\quad\forall x_1\in [x_2,x^*[.
\]
In particular, we have
\[
V(x^*)-V(x_2)~\geq~\sup_{x_1\in [x_2,x^*[}\left[B\cdot {r\ln(x_1/x_2)\over r\ln(x_1/x_2)+\beta^*}\right]~=~B\cdot {r\ln(x^*/x_2)\over r\ln(x^*/x_2)+\beta^*}.
\]
By mean value theorem, there exists $x_c\in [x_2,x^*]$ such that
\begin{equation}\label{Vc}
V'(x_c)\cdot x_c~\geq~B\cdot {r\ln(x^*/x_2)\over r\ln(x^*/x_2)+\beta^*}\cdot {x_2\over x^*-x_2}.
\end{equation}
On the other hand, from the first equation of (\ref{sys}) and Lemma \ref{H-C1}, it holds
\[
{\sigma^2\over 2}[x^2V''(x)+xV'(x)]~\geq~-cxV'(x)-\rho(x)B\qquad\forall x\in ]0, x^*[,
\]
with constant $c={r+\lambda\over \theta_{\min}}+{\sigma^2\over 2}$. Set $Z(x)=xV'(x)$, $c_1={4c\over \sigma^2 x^*}$ and $c_2={4B\over \sigma^2x^*}$, we have
\[
Z'(x)~\geq~-c_1 Z(x)-c_2 \rho(x)\qquad\forall x\in ]x^*/2,x^*[.
\]
Solving the differential inequality yields
\[
Z(x)~\geq~e^{c_1(x_0-x)}\cdot Z(x_0)-c_2\int_{x_0}^{x}\rho(s)ds\quad\forall~~{x^*\over 2}<x_0\leq x<x^*.
\]
In particular, recalling (\ref{Vc}), we have
\[
Z(x)~\geq~Be^{c_1(x_2-x^*)}{r\ln(x^*/x_2)\over r\ln(x^*/x_2)+\beta^*}\cdot {x_2\over x^*-x_2}-c_2\int_{x_2}^{x^*}\rho(s)ds~=:~I(x_2),
\]
for all $x^*/2<x_2<x_c\leq x<x^*$. Since $\beta^*<\ds Br\cdot \min\left\{\ds{1\over c'(0)}, {1\over L'(0)x^*}\right\}$ and $\ds\int_{0}^{x^*}\rho(t)~dt<+\infty$, it holds
\[
\lim_{x_2\to x^*}I(x_2)~=~{Br\over \beta^*}~>~\max\left\{c'(0), {L'(0) x^*}\right\}.
\]
Thus, there exists $x_2\in [x^*/2,x^*[$ sufficiently close to $x^*$ such that
\[
Z(x)~\geq~I(x_2)~>~c'(0)\qquad\mathrm{and}\qquad V'(x)~\geq~{I(x_2)\over x}~>~L'(0)~\geq~L'(0)p(x),
\]
for all $x\in [x_c,x^*]$ and, recalling \eqref{ufb}-\eqref{vfb}, this yields (i) for $\bar{x}=x_c$.
\medskip

{\bf (ii).} Assuming that (\ref{asp1}) holds, we recall $x^{\diamond}$ in Proposition \ref{BB}. For every $x_1\in \left[x^{\diamond}, x^*\right[$, it holds
\begin{equation}\label{Stev}
V'(x_c)\cdot x_c~=~{V(x^*)-V(x_1)\over x^*-x_1}\cdot x_c~\leq~{B-V_2(x_1)\over x^*-x_1 }\cdot x^*
~=~{B\over  \ln\p{ \dfrac{x^*}{x^{\diamond}}}(x^*-x^\diamond)}\cdot \ln\left({x^*\over x_1}\right),
\end{equation}
for some $x_c\in ]x_1,x^*[$. From the first equation of (\ref{sys}) and Lemma \ref{H-C1}, one estimates
\[
{\sigma^2\over 2}[x^2V''(x)+xV'(x)]~\leq~rB+\left(\lambda+\mu+v_{\max}\right)xV'(x)\qquad\forall x\in ]x^*/2, x^*[.
\]
Recalling that $Z(x)=xV'(x)$, we have
\[
Z'(x)~\leq~c_3 Z(x)+c_4\qquad\forall x\in [x^*/2,x^*],
\]
with $c_3:={4rB\over \sigma^2 x^*}$ and $c_4:={2\left(\lambda+\mu+v_{\max}\right)\over \sigma^2 x^*}$. Thus,
\[
Z(x)~\leq~e^{c_3\cdot (x-x_c)}\cdot Z(x_c)+{c_4\over c_3}\cdot \left(e^{c_3\cdot (x-x_c)}-1\right)
\]
and (\ref{Stev}) implies that
\[
Z(x)~\leq~{B\over  \ln\p{ \dfrac{x^*}{x^{\diamond}}}(x^*-x^\diamond)}\cdot e^{c_3\cdot (x^*-x_1)}\cdot \ln\left({x^*\over x_1}\right)+{c_4\over c_3}\cdot \left(e^{c_3\cdot (x^*-x_1)}-1\right) ~=:~J(x_1),
\]
for all $x\in [x_c,x^*[$. Since $\lim_{x_1\to x^*-}J(x_1)=0$, there exists $x_1\in ]x^\diamond,x^*[$ such that
\[
Z(x)~\leq~J(x_1)~\leq~c'(0)\quad\mathrm{and}\quad V'(x)~\leq~{J(x_1)\over x}~\leq~\theta_{\min}L'(0)~\leq~L'(0)\cdot p(x),
\]
for all $x\in [x_c,x^*[$ and, recalling \eqref{ufb}-\eqref{vfb}, setting $\hat{x} = x_c$ yields (ii).
\end{proof}
\v

{\bf Acknowledgments.} The research by K. T. Nguyen was partially supported by a grant from
the Simons Foundation/SFARI (521811, NTK).

\normalem

\end{document}